\newcommand{\Z}{\mathbb Z}
\newcommand{\N}{\mathbb N}
\newcommand{\eps}{\varepsilon}
\newtheorem{thm}{Theorem}[section]
\newtheorem{lem}[thm]{Lemma}
\newtheorem{prop}[thm]{Proposition}
\newtheorem{cor}[thm]{Corollary}
\newtheorem{fact}[thm]{Fact}
\newtheorem*{repeatthm}{Theorem \ref{thm:repeat}}
\newtheorem*{cornonum1}{Corollary \ref{cor:repeat1}}
\newtheorem*{cornonum2}{Corollary \ref{cor:repeat2}}
\newtheorem*{thmnonum}{Theorem}
\begin{document}
\title{Chaos and Indecomposability}
\author[Darji] {Udayan B.~Darji}
\author[Kato]{Hisao Kato}
\email[Darji]{ubdarj01@louisville.edu}
\email[Kato]{hkato@math.tsukuba.ac.jp}
\address[Darji]{Department of Mathematics, University of Louisville, Louisville, KY 40292, USA.}
\address[Kato]{Institute of Mathematics, University of Tsukuba, Ibaraki 305-8571, Japan}
\keywords{Entropy, $IE$-pair, indecomposable continuum, monotone map, 
tree-like, graph-like}
\thanks{The first author thanks the hospitality of the Institute of Mathematics, Tsukuba University where this research was conducted.}
\subjclass[2010]{Primary 37B45, 37B40;
Secondary 54H20, 54F15.}

\maketitle

\begin{abstract}We use recent developments in local entropy theory to prove that
chaos in dynamical systems implies the existence of complicated structure in the 
underlying space. 
Earlier Mouron proved that if $X$ is an arc-like continuum which admits a homeomorphism 
$f$ with positive topological entropy, then $X$ contains an indecomposable subcontinuum. 
Barge and Diamond proved that if $G$ is a finite graph and $f:G \rightarrow G$ is any map with positive topological entropy, then the inverse limit space $\varprojlim(G,f)$ contains an indecomposable continuum.
In this paper we show that if $X$ is a $G$-like continuum for some finite graph $G$ and $f:X \rightarrow X$
is any map with positive topological entropy, then $\varprojlim (X,f)$ contains an indecomposable continuum.  As a corollary, we obtain that in the case that $f$ is a homeomorphism, $X$ contains an indecomposable continuum. Moreover, if $f$ has uniformly positive upper entropy, then $X$ is an indecomposable continuum. Our results answer some questions raised by Mouron and generalize
the above mentioned work of Mouron and also that of Barge and Diamond. 
We also introduce a
new concept called zigzag pair which attempts to capture the complexity of
a dynamical systems from the continuum theoretic perspective and facilitates
the proof of the main result.
\end{abstract}

\section{Introduction} During the last thirty years or so, many interesting connections between topological dynamics and continuum theory have been established. One of the underlying themes is that somehow
complicated dynamics should imply existence of complicated continua. More precisely, if $X$ is a continuum (i.e., compact connected metric space) and $h:X \rightarrow X$ is a homeomorphism of $X$ such that $(X,h)$ is chaotic in some sense, then $X$ should contain a complicated subcontinuum. In the case that $h$ happens to be simply a continuous surjection, then
the inverse limit space $\varprojlim (X,h)$ should contain a complicated continuum. 

This line of investigation began with the seminal paper of Barge and Martin \cite{bm} where they showed that if $f$ is a continuous self-map of the
the interval $I = [0,1]$ which has a periodic point not a power of 2, then $\varprojlim(I,f)$ contains an indecomposable continuum.
By Misiurewicz's theorem \cite{bc}, we have that a continuous map of  the interval has a periodic point not a power of 2 if and only if the map has positive topological entropy.  Hence, we have that if $f: I \rightarrow I$ has positive topological entropy, then $\varprojlim(I,f)$ contains an indecomposable continuum. As one of the generally accepted definitions of chaos is that the map have positive topology entropy, we have, in the case of the interval, that chaos implies the existence of  a complicated subcontinuum in the inverse limit space. 

Let $G$ be a compact metric space. A continuous mapping $g$ from $X$ onto $G$ is an \emph{ $\eps$-mapping}
if for every $x \in X$, the diameter of $g^{-1}(x)$ is less than $\eps$. A continuum $X$ is \emph{ $G$-like} if for every $\eps>0$ there is a $\eps$-mapping from $X$ onto $G$. Or, equivalently, $X$ is homeomorphic to the inverse limit of a sequence of continuous surjections of $G$. (See Section~2 for definitions.) In particular, arc-like continua are those which are $G$-like for $G=[0,1]$.

In 1989, Ingram \cite{ingram} showed that
if $X$ is an arc-like continuum and $h$ is a homeomorphism of $X$ with a periodic point not a power of 2,
then $X$ contains an indecomposable continuum. In 1995, Ye
\cite{ye} generalized Ingram's result and that of Barge and Martin  by showing that if $X$ is arc-like and $f:X \rightarrow X$ is a continuous
surjection with a periodic point not a power of 2, then $\varprojlim(X,f)$ contains an indecomposable continuum.
As of now it is not known if positive entropy maps of arc-like hereditarily decomposable continua must have a periodic point which
is not a power of 2. Hence, one cannot conclude directly from Ye's result that positive entropy on arc-like
continua implies indecomposability in the corresponding inverse limit space. Ye in the same paper and also
in \cite{lxy2} proved that for certain special types of  homeomorphisms of the arc-like continua, positive entropy implies that the domain contains an
indecomposable continuum.
In 2011, Mouron \cite{mouron} settled the more general problem by proving that if $h$ is a homeomorphism of an arc-like continuum and $h$ has positive entropy, then $X$ contains an indecomposable continua. He raised the questions if the same result holds for
the monotone open maps of arc-like continua or, in general, for monotone maps of arc-like continua.

Dynamical systems where $X$ is more general than arc-like continua also have been investigated from this perspective. In 1990 Seidler \cite{seidler} showed that if $X$ is a continuum which admits a homeomorphism
of positive topological entropy, then $X$ is not a regular continuum. A continuum is \emph{ regular} if for
every $\eps >0$, there is an $n \in \N$ such that $X$ has at most $n$ pairwise disjoint continua of diameter
bigger than $\eps$. In 2004 Kato \cite{kato1} generalized Siedler's result by showing that one can replace 
"homeomorphism" in the above statement with a monotone map or a confluent map.  In \cite{kato2} a detailed
analysis of topological entropy of maps on regular curves was carried out. 

Barge and Diamond in 1994 \cite{bd} showed a remarkable result. They showed that for piecewise monotone
surjections of graphs, the conditions of having positive entropy, containing a horse shoe and the inverse limit space containing an indecomposable subcontinuum are all equivalent. Some modifications and extension of 
their results were given in \cite{lxy}.
However, one cannot hope to generalize their
result to arbitrary maps. A classical example of Henderson \cite{henderson} show that there is a map of the interval with no
chaotic behavior, in particular with topological entropy zero, whose inverse limit space is hereditarily indecomposable. Recently, Boro\'ski and Oprocha \cite{bo} have shown that one cannot relax the hypothesis of positive topological entropy to Li-Yorke chaos. Thus, the best one can hope for is to obtain results where positive entropy implies indecomposability in the corresponding inverse limit space. On the other hand, there are restrictions on the domain space also as observed by Mouron \cite{mouron}. There is a homeomorphism of the Cantor fan which has positive
topological entropy. \\

Our main theorem in this paper is the following:
\begin{repeatthm} Let $G$ be a finite graph, $X$ be a $G$-like continuum and $f:X \rightarrow X$ be a continuous
surjection with positive entropy. Then, $\varprojlim (X,f)$ contains an indecomposable continuum. In particular, if $f$ is a homeomorphism, then $X$ contains an indecomposable continuum.
\end{repeatthm}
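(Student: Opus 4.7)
My strategy is to combine the local entropy theory of IE-pairs with the continuum-theoretic structure of $G$-like spaces. In broad strokes: positive entropy produces a dynamical independence pair in $X$; the $\eps$-mappings $X \to G$ channel this independence onto the finite graph $G$ in the form of a ``zigzag pair''; and the combinatorial zigzag structure on $G$ is then pulled back through the $\eps$-mappings to manufacture an indecomposable subcontinuum in $\varprojlim(X,f)$.

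First, since $f$ has positive topological entropy, the Kerr--Li theory provides a non-diagonal IE-pair $(a,b) \in X \times X$ for $(X,f)$: there exist disjoint open neighborhoods $U \ni a$ and $V \ni b$ such that for every $k$ one can find times $0 \le n_1 < \cdots < n_k$ with $\bigcap_{i=1}^k f^{-n_i}(\omega_i) \neq \emptyset$ for every word $\omega \in \{U,V\}^k$.

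Next, fix a sequence of $(1/n)$-mappings $g_n : X \to G$. For large $n$, $g_n(U)$ and $g_n(V)$ contain disjoint small arcs $A, B \subset G$. Using the independence times above together with the correspondences induced on $G$ by $g_n \circ f^{n_i} \circ g_n^{-1}$, I would extract a configuration that one could reasonably call a \emph{zigzag pair}: disjoint arcs $(A,B) \subset G$ together with, for every $k$, a subarc $J_k \subset G$ and data specifying how a long composition of bonding maps sends $J_k$ across $G$ in a path visiting $A$ and $B$ in any prescribed binary pattern of length $k$. Given such zigzag data, I would imitate the Barge--Martin / Barge--Diamond construction: for each $m$, pull the length-$m$ zigzag arc in $G$ back through $g_n^{-1}$ to obtain a subcontinuum $C_m \subset X$ whose iterates under $f$ oscillate between fixed neighborhoods of $a$ and $b$ at least $m$ times, lift these $C_m$ into $\varprojlim(X,f)$, and extract a Hausdorff limit $Z$. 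The zigzag data then forces $Z$ to contain two distinct proper subcontinua each dense in $Z$, so $Z$ is indecomposable. The homeomorphism corollary is immediate because $\varprojlim(X,f) \cong X$ in that case.

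The main obstacle will be the transfer step: converting the purely combinatorial/measure-theoretic independence of an IE-pair in $(X,f)$ into a geometric folding statement on the finite graph $G$ via non-dynamical $\eps$-mappings $g_n$. Because $G$ can have branch points and cycles, and because $f$ does not act on $G$ at all, one must rule out that the ``zigzags'' on $G$ dissipate around a loop or escape into a branch rather than genuinely folding across $(A,B)$; one also has to make sure that, after pulling back through $g_n^{-1}$, the folding across $A,B$ in $G$ actually survives as folding across neighborhoods of $a,b$ in $X$ (so the mesh $\eps_n$ must be coordinated with the independence times $n_i$). Once the zigzag pair is established, the construction of the indecomposable subcontinuum in the inverse limit is essentially standard continuum theory in the spirit of Barge--Martin.
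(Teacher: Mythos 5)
Your three-act skeleton (positive entropy $\Rightarrow$ IE-pair $\Rightarrow$ zigzag pair $\Rightarrow$ indecomposable continuum) matches the paper's, and you even guessed the name of the intermediate object; but the step you defer as ``the main obstacle'' is precisely the content of the paper's proof, and your proposed mechanism for it would fail. First, you keep the non-invertible $f$ acting on $X$ and try to transport independence onto $G$ via the correspondences $g_n \circ f^{n_i} \circ g_n^{-1}$; these are multivalued, $f$ induces no dynamics on $G$, and nothing controls their compositions. The paper avoids this entirely by invoking Bowen's theorem at the outset: replace $(X,f)$ by $(\varprojlim(X,f),\sigma)$, which is again $G$-like, with $\sigma$ a \emph{homeomorphism} of the same entropy; the theorem for homeomorphisms is proved first and the surjection case is its corollary (the reverse of your order, where you lift to the inverse limit only at the end). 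Second, the zigzags are never produced on $G$ itself: they are free chains in an $\eps$-cover of the $G$-like space whose \emph{nerve} is $G$, and their extraction is a genuine combinatorial argument with no analogue in your sketch. One enlarges the i.s.p.d.\ of a pair $\{A_0,A_1\}$ to the $2^n$ pairwise disjoint sets $\bigcap_{i\in F} f^{-i}(A_{\sigma(i)})$, $\sigma\in\{0,1\}^F$ (Propositions~\ref{sumset} and~\ref{enlargeindsets}, using positive density of the independence set), notes that each element of a sufficiently fine cover meets at most one of these sets, pigeonholes over the edges of $G$ to find a single free chain meeting at least $2^n/|G|$ of them, and then applies the binary-word pigeonhole (Proposition~\ref{combo}) to find one coordinate $i\in F$ along which the chain alternates; invertibility is used once more to push the chain forward by $f^i$. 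Homeomorphy, not just continuity, is essential here, which is another reason your plan of working with the surjection $f$ directly on $X$ does not go through.

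The endgame is also wrong as stated: a subcontinuum is closed, so a \emph{dense proper} subcontinuum of $Z$ cannot exist, and ``$Z$ contains two distinct proper subcontinua each dense in $Z$'' is self-contradictory (you presumably intend dense composants or semicontinua, but that still needs an argument). The paper's actual criteria are different in the two regimes. For trees, a Z-pair yields crookedness of $(X,a,b)$ between all neighborhoods, and the Krasinkiewicz--Minc theorem converts this into indecomposability of every irreducible continuum between $a$ and $b$ (Theorem~\ref{zpairind}). For a general graph---exactly the situation where, as you note, a zigzag can dissipate around a cycle, a failure mode you name but do not resolve---crookedness is unavailable, and the paper instead builds a defining sequence of covers with nested free chains in which two disjoint subchains of $\mathcal{C}_{n+1}$ each refine onto all of $\mathcal{C}_n$ (Lemmas~\ref{genhelp} and~\ref{genind}); the Hausdorff limit of the chain unions is then shown directly to be a continuum all of whose proper subcontinua are nowhere dense, hence indecomposable (Lemma~\ref{indcriteria}). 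So the proposal identifies the right landmarks but is missing the two load-bearing arguments: the density/pigeonhole extraction of zigzags from independence, and a correct indecomposability criterion robust to cycles in $G$.
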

This theorem generalizes that of Mouron \cite{mouron}. It also generalizes the above mentioned theorem of
Barge and Diamond \cite{bd}. Moreover, using some easily proved lemmas
and our main result, we can answer some questions left open in \cite{mouron}.

\begin{cornonum1} Let $G$ be a tree and $X$ be a $G$-like continuum. If $h: X \rightarrow X$ is a
continuous monotone surjection of $X$, then $X$ contains an indecomposable continuum. 
\end{cornonum1}

As in the article of Mouron \cite{mouron}, recent developments in local entropy theory are essential
to our proofs. We refer the reader to \cite{glasnerye} for a general survey on the subject. Inspired by the concepts of entropy
pairs and IE-pair, we introduce a new concept called \emph{ zigzag pair} (or Z-pair for short.)
This notion attempts to capture the complicated continuum theoretic behavior of a topological dynamical
system. The precise definition is somewhat involved and stated in the next section. Our main result is 
broken down into several pieces. The first piece show that if $X$ is a graph-like continuum for some fixed
graph $G$ and $h$ is a homeomorphism of $X$ with positive topological entropy, then $X$ has
a Z-pair. Then, in the case that $X$ is $G$-like for some tree $G$, we obtain that every irreducible
continuum between the Z-pair is indecomposable. In the general case where $X$ is $G$-like
for some graph $G$, we obtain that $X$ contains an indecomposable continuum but not necessarily
between every Z-pair. Their proofs have some features in common but are proved in separate sections.

The notion of uniformly positive entropy was introduced by Blanchard \cite{blanchard}. It has become
an useful and popular notion. We obtain the following corollary concerning this notion.

\begin{cornonum2} Let $G$ be a graph, $X$ a $G$-like continuum and $h:X \rightarrow X$ a homeomorphism
with u.p.e. Then, $X$ is an indecomposable continuum.
\end{cornonum2}

\section{Definitions and Notation}
A \emph{ continuum} is a compact connected metric space. We say that a continuum is \emph{nondegenerate}
if it has more than one point. A continuum is \emph{ indecomposable}
if it has more than one point and is not the union of two proper subcontinua.  \emph{ A continuum $X$ is unicoherent} if $H, K$ are subcontinua of $X$ such that $X = H \cup K$, then $H \cap K$ is a continuum.
It is \emph{ hereditarily unicoherent} if every subcontinuum of $X$ is unicoherent. In particular, $X$ is 
hereditarily unicoherent if and only if the intersection of any two subcontinua of $X$ is again a subcontinuum
of $X$.

Let $G$ be a compact metric space. A continuous mapping $g$ from $X$ onto $G$ is an \emph{ $\eps$-mapping}
if for every $x \in X$, the diameter of $g^{-1}(x)$ is less than $\eps$. A continuum $X$ is \emph{ $G$-like} if for every $\eps>0$ there is a $\eps$-mapping from $X$ onto $G$.  Our focus in this article is on $G$-like
continua where $G$ is a graph. In this article all graphs have
finitely many vertices. We use topological
and graph theoretic properties of graphs without explicitly stating so.

All maps are assumed to be continuous unless otherwise stated. Moreover, all self-maps
are assumed to be surjective unless otherwise stated.

Let $f:X \rightarrow Y$ be a map. We say that \emph{ $f$ is monotone} if $f^{-1}(x)$ is connected whenever nonempty. We recall that this definition is equivalent to stating that the preimage of a continuum
is a continuum whenever nonempty.

If $f:X \rightarrow X$ is a  map, then we use \emph{ $\varprojlim (X,f)$ to denote
the inverse limit of $X$ with $f$ as the bonding maps}, i.e., 
\[ \varprojlim (X,f) = \left \{ (x_i) \in X^{\N}: f(x_{i+1}) = x_i \right \}.
\]
The topology on $\varprojlim (X,f)$ is the subspace topology inherited from the product topology
on $X^{\N}$. In particular, if $X$ is compact, then $\varprojlim (X,f)$
is compact and, similarly, if $X$ is a continuum, then so is $\varprojlim (X,f)$.
The reader may refer to \cite{nadler}, \cite{kurt} for standard facts concerning 
continuum theory.\\

We will freely use the following collection of standard facts from continuum theory.
\begin{fact}\label{contfacts}
Let $X$ be a continuum and $f: X \rightarrow X$ be a  map.
\begin{enumerate}
\item \label{5} If $X$ is $G$-like, then so is $\varprojlim (X,f)$.
\item \label{2} If $X$ is $G$-like for a tree $G$, then $X$ is hereditarily unicoherent.
\item \label{4} If $X$ is hereditarily unicoherent, then so is $\varprojlim (X,f)$. 
\item \label{7} If $f$ is monotone, then the projection of $\varprojlim (X,f)$ onto the
$n^{th}$ coordinate is also a monotone map. 
\item \label{1}If $X$ is hereditarily unicoherent and $f$ is monotone, then the restriction of $f$
to any continuum is also a monotone map. 
\item \label{6.5} Monotone continuous image of an indecomposable continuum is also
indecomposable provided that the image has more than one point.
\end{enumerate}
\end{fact}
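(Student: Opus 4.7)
All six items are standard facts in continuum theory, and the main effort is routine bookkeeping; here is how I would organize the verifications. For (1), I use the equivalent description of $G$-likeness: $X$ is $G$-like iff $X\cong\varprojlim(G, g_k)$ for some surjective bonding maps $g_k\colon G\to G$. Substituting this into $\varprojlim(X,f)$ gives a double inverse limit indexed by $\N\times\N$, and a cofinal reindexing converts it to a single inverse limit of copies of $G$, so $\varprojlim(X,f)$ is $G$-like.

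For (3), take subcontinua $H, K$ of $Y:=\varprojlim(X,f)$ with $H\cap K\neq\emptyset$, and set $H_n=\pi_n(H)$, $K_n=\pi_n(K)$. These are subcontinua of $X$, so by hereditary unicoherence each $H_n\cap K_n$ is a subcontinuum of $X$. One identifies $H\cap K$ with $\varprojlim(H_n\cap K_n, f|)$; the restricted bonds need not be surjective, but replacing each level by its ``core'' (the eventual intersection of the forward images from all higher levels) yields a cofinal system of continua with surjective bonds and the same inverse limit, which is therefore a continuum. Item (2) then follows from (3) together with the observation that trees are hereditarily unicoherent and that a $G$-like continuum for a tree $G$ is by definition an inverse limit of trees.

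For (4), the fiber $\pi_n^{-1}(x)$ inside $\varprojlim(X,f)$ has its coordinates below $n$ uniquely determined by iterating $f$, while its coordinates above $n$ form the inverse limit of the continua $f^{-k}(x)$ (each connected by monotonicity of $f$), so the fiber is itself a continuum. For (5), a fiber $(f|_H)^{-1}(y)=f^{-1}(y)\cap H$ is the intersection of two subcontinua of $X$ and hence a continuum by hereditary unicoherence. For (6), suppose $g\colon Y\to Z$ is a monotone continuous surjection with $Y$ indecomposable and $Z$ nondegenerate, and assume $Z=A\cup B$ with $A, B$ proper subcontinua. Monotonicity makes $g^{-1}(A)$ and $g^{-1}(B)$ subcontinua of $Y$; properness of $A$ and $B$ together with surjectivity of $g$ forces both preimages to be proper in $Y$, and their union is $Y$, contradicting indecomposability. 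The one genuinely subtle point is the cleanup passage needed in (3); the remainder is essentially standard verification.
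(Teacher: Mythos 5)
The paper gives no proof of this Fact at all: it is stated as a list of standard results, with the reader referred to Nadler and Kuratowski, so your proposal can only be judged against the standard arguments. By that measure, items (2) through (6) are essentially correct. Two small remarks on your (3): an inverse limit of nonempty continua is a continuum even when the bonding maps fail to be surjective, so the ``core'' construction, though valid, is dispensable; and the identification $H\cap K=\varprojlim (H_n\cap K_n, f|)$ deserves its one-line verification --- for a thread $(y_n)$ with $y_n\in H_n\cap K_n$, pick $h^{(n)}\in H$ with $\pi_n(h^{(n)})=y_n$; the coordinates below $n$ are then forced to agree with those of $y$, so $h^{(n)}\to y$ and $y\in H$ since $H$ is closed --- together with the observation $f(H_{n+1})=H_n$, which follows from $f\circ\pi_{n+1}=\pi_n$. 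Your reduction of (2) to (3) is legitimate, since the paper itself records the equivalence of $G$-likeness with being an inverse limit of copies of $G$ (here applied only to $X$ itself, with varying bonding maps, to which the argument of (3) extends verbatim), and trees are hereditarily unicoherent. In (4), note that the standing surjectivity assumption on self-maps is what makes each $f^{-k}(x)$ nonempty and the restricted bonds $f\colon f^{-(k+1)}(x)\to f^{-k}(x)$ surjective; (5) and (6) are the standard arguments and are fine.

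The genuine gap is in item (1). Writing $X\cong\varprojlim(G,g_k)$ and ``substituting'' this into $\varprojlim(X,f)$ does not produce a double inverse limit: an $\N\times\N$-indexed inverse system requires exactly commuting squares, that is, maps between the finite stages $G$ intertwining $f$ with the $g_k$, and an arbitrary map $f$ of the limit space induces no such stage maps. Producing even almost-commuting diagrams for a map between inverse limits of polyhedra is Mioduszewski's theorem, and upgrading almost-commuting to an exactly commuting cofinal system requires a Brown-type approximation argument --- far from routine bookkeeping; as written, your cofinal reindexing has no commuting grid to reindex. The genuinely routine proof works directly with the $\eps$-map definition: metrize $\varprojlim(X,f)$ by $d((x_i),(y_i))=\sum_i 2^{-i}d(x_i,y_i)$, fix $\eps>0$, choose $n$ with $2^{-n}\mathrm{diam}(X)<\eps/2$, use uniform continuity of $f,f^2,\ldots,f^{n-1}$ to choose $\delta<\eps/2$ so small that each $f^j$ with $1\le j\le n-1$ maps sets of diameter less than $\delta$ to sets of diameter less than $\eps/2$, and let $g\colon X\to G$ be a $\delta$-map; then $g\circ\pi_n$ is an $\eps$-map of $\varprojlim(X,f)$ onto $G$, because two points of one fiber agree to within $\delta$ in coordinate $n$, hence to within $\eps/2$ in every coordinate $j\le n$, while the tail beyond $n$ contributes less than $\eps/2$. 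This is also consonant with the rest of your write-up: the projections $\pi_n$, not a reindexed grid, are the correct tool throughout.
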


If ${\mathcal G}$ is a collection of sets, then the \emph{ nerve of ${\mathcal G}$} is the graph whose
vertices are elements of ${\mathcal G}$ and there is an edge between two distinct vertices $g_1, g_2$
if $g_1 \cap g_2 \neq \emptyset$. (We do not allow edges from a vertex to itself.)   If $\{C_1, \ldots, C_n\}$ is a subcollection of ${\mathcal G}$ we call 
it a \emph{chain} if $C_i \cap C_{i+1} \neq \emptyset$ for $1\le i <n$ and
$\overline{C_i }\cap \overline{ C_j }\neq \emptyset $ implies that $|i - j| \le 1$. We say
that $\{C_1, \ldots, C_n\}$ is a \emph{ free chain in ${\mathcal G}$} if it is a chain and, moreover,
for all $1 < i <n$ we have that $C \in {\mathcal G}$ with $\overline{C} \cap \overline{C_i} \neq \emptyset$ implies
that $C=C_{i-1}$ or $C=C_{i+1}$.

A \emph{ topological dynamical system}  is an ordered pair $(X, f)$ where $X$ is a compact metric space and $f:X \rightarrow X$
is a surjective map. In this article, we only consider dynamical systems where $X$ is connected. Implications
of having positive topological entropy is the focus of our investigation. As in \cite{mouron} we make an extensive use of local entropy theory. 
Below are the definitions of relevant concepts. The reader can find more details in
\cite{glasnerye} and \cite{kerrli}.

Let $X$ be a compact metric space and ${\mathcal U}, {\mathcal V}$ be two covers of $X$. Then,
\[ {\mathcal U} \vee {\mathcal V} = \{ U \cap V: U \in {\mathcal U}, V \in {\mathcal V}\}.
\]
The quantity $N({\mathcal U})$  denote minimal cardinality of a subcollection of ${\mathcal U}$ which
covers $X$. Let $f:X \rightarrow X$ be continuous and ${\mathcal U}$ be an open cover of $X$. Then,
\[h_{top} ({\mathcal U},f) = \limsup_{n \rightarrow \infty} \frac{ \ln  \left [N({\mathcal U} \vee
f^{-1}( {\mathcal U}) \vee \ldots \vee f^{-n+1}( {\mathcal U}) )  \right ]}{n}.
\]
The \emph{ topological entropy of $f$,} denoted by $h_{top}(f)$, is simply the supremum of $h_{top} (f, {\mathcal U})$
over all open covers ${\mathcal U}$ of $X$.

The following standard fact is a classical result of Bowen \cite{bowen}.
\begin{fact}\label{bowen} Let $f: X \rightarrow X$ and $\sigma$ the shift map on $\varprojlim (X,f)$ defined by 
$\sigma(x_1,x_2,\ldots) = (f(x_1), x_1, x_2,x_3,\ldots)$. Then, $\sigma$ is a homeomorphism
with entropy that of $f$.

\end{fact}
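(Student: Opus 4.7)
The plan is to verify the two claims of the fact separately: first that $\sigma$ is a homeomorphism, and then that $h_{top}(\sigma) = h_{top}(f)$, with the latter reduced to manipulating open covers.

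For the homeomorphism claim, I would define $\tau: \varprojlim(X,f) \to \varprojlim(X,f)$ by $\tau(x_1, x_2, x_3, \ldots) = (x_2, x_3, x_4, \ldots)$. The bonding condition $f(x_{i+1}) = x_i$ is stable under index shift, so $\tau$ actually lands in $\varprojlim(X,f)$, and a direct computation using $x_1 = f(x_2)$ shows $\sigma\tau = \tau\sigma = \mathrm{id}$. Each coordinate of $\sigma(x)$ and of $\tau(x)$ depends continuously on finitely many coordinates of $x$ (using continuity of $f$), so both $\sigma$ and $\tau$ are continuous; hence $\sigma$ is a homeomorphism.

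For the entropy equality, the engine is the identity $\pi_i = f^{n-i} \circ \pi_n$ for $1 \le i \le n$ (where $\pi_n$ denotes projection to the $n$-th coordinate), together with the observation that $\sigma^{-k}(\pi_1^{-1}(\mathcal{U})) = \pi_{k+1}^{-1}(\mathcal{U})$ for any open cover $\mathcal{U}$ of $X$. Combining these,
\[
\bigvee_{k=0}^{n-1} \sigma^{-k}\!\bigl(\pi_1^{-1}(\mathcal{U})\bigr) \;=\; \bigvee_{i=1}^{n} \pi_i^{-1}(\mathcal{U}) \;=\; \pi_n^{-1}\!\Bigl(\bigvee_{j=0}^{n-1} f^{-j}(\mathcal{U})\Bigr).
\]
Since $f$ is surjective, $\pi_n$ is surjective, and a standard argument (a minimal subcover on $X$ pulls back to a subcover on $\varprojlim$, and conversely any subcover of $\pi_n^{-1}(\mathcal{W})$ pushes forward to a subcover of $\mathcal{W}$) gives $N(\pi_n^{-1}(\mathcal{W})) = N(\mathcal{W})$. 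Thus $h_{top}(\sigma, \pi_1^{-1}(\mathcal{U})) = h_{top}(f, \mathcal{U})$. Taking suprema over $\mathcal{U}$ yields $h_{top}(\sigma) \ge h_{top}(f)$.

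For the reverse inequality I would invoke the fact that cylinder sets of the form $\pi_n^{-1}(U)$ ($n \in \N$, $U$ open in $X$) form a subbase for the topology on $\varprojlim(X,f)$, so every finite basic open set is itself of the form $\pi_n^{-1}(V)$ for some $n$ and open $V \subseteq X$. Given an arbitrary open cover $\mathcal{V}$ of $\varprojlim(X,f)$, compactness plus this subbase observation produce a single $n$ and an open cover $\mathcal{U}$ of $X$ for which $\pi_n^{-1}(\mathcal{U})$ refines $\mathcal{V}$. Writing $\pi_n^{-1}(\mathcal{U}) = \sigma^{-(n-1)}(\pi_1^{-1}(\mathcal{U}))$ and using that $\sigma$ is a homeomorphism (so shifts in the time index do not affect the entropy of a cover), one obtains $h_{top}(\sigma, \mathcal{V}) \le h_{top}(\sigma, \pi_1^{-1}(\mathcal{U})) = h_{top}(f, \mathcal{U}) \le h_{top}(f)$. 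Taking supremum over $\mathcal{V}$ gives $h_{top}(\sigma) \le h_{top}(f)$, completing the proof.

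The main obstacle is bookkeeping: getting the intertwining indices right under the paper's convention $\sigma(x_1, x_2, \ldots) = (f(x_1), x_1, x_2, \ldots)$ (which gives $\pi_n \circ \sigma = \pi_{n-1}$ for $n \ge 2$ rather than the cleaner factor-map relation one sometimes writes), and justifying the cover-approximation step carefully enough that the equality $N(\pi_n^{-1}(\mathcal{W})) = N(\mathcal{W})$ holds on the nose rather than just up to a multiplicative constant.
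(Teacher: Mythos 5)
The paper offers no proof of this fact at all: it is recorded as a classical result with a citation to Bowen, so there is no internal argument to compare against. Your proof is the standard one (essentially Bowen's), and it is correct in substance: the left shift as explicit inverse, the relations $\pi_i = f^{n-i}\circ\pi_n$, the base of cylinder sets closed under finite intersections, and the exact equality $N(\pi_n^{-1}(\mathcal{W})) = N(\mathcal{W})$ --- which does hold on the nose, since the paper's standing assumption that self-maps are surjective makes every $\pi_n$ surjective --- are all sound. The one genuine error is the direction of your intertwining identity: with the paper's convention one has $\pi_1\circ\sigma^k = f^k\circ\pi_1$ and $\pi_{k+1}\circ\sigma^k = \pi_1$, so the \emph{preimage} $\sigma^{-k}(\pi_1^{-1}(\mathcal{U}))$ equals $\pi_1^{-1}(f^{-k}(\mathcal{U}))$, while $\pi_{k+1}^{-1}(\mathcal{U})$ is the \emph{forward image} $\sigma^{k}(\pi_1^{-1}(\mathcal{U}))$; the identity you display is the correct formula for $\sigma^{-1}$, not $\sigma$, and likewise $\pi_n^{-1}(\mathcal{U}) = \sigma^{n-1}(\pi_1^{-1}(\mathcal{U}))$ rather than $\sigma^{-(n-1)}(\pi_1^{-1}(\mathcal{U}))$. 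The slip is self-healing: the corrected join $\bigvee_{k=0}^{n-1}\sigma^{-k}(\pi_1^{-1}(\mathcal{U})) = \pi_1^{-1}\bigl(\bigvee_{k=0}^{n-1}f^{-k}(\mathcal{U})\bigr)$ yields $h_{top}(\sigma,\pi_1^{-1}(\mathcal{U})) = h_{top}(f,\mathcal{U})$ even more directly than your route through $\pi_n$, and since your two candidate joins differ only by the homeomorphism $\sigma^{n-1}$, their $N$-values agree anyway --- precisely the ``shifts in the time index do not affect the entropy of a cover'' principle you invoke in the reverse inequality, which repairs that direction as well. So the proof stands after flipping the sign of the exponent in those two identities; this is exactly the bookkeeping hazard you flagged in your closing remark.
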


Let $X$ be a compact metric space and $f:X \rightarrow X$ be a homeomorphism. Let ${\mathcal A}$
be a collection of subsets of $X$. We say that \emph{  ${\mathcal A}$ has an independence set
with positive density  (for brevity, $A$ has an i.s.p.d.)}  if there exists a set $I \subset \N$  with positive density such that for all finite sets $J \subseteq I$,
and for all $(Y_j) \in \prod _{j\in J}{\mathcal A}$, we have that
$\cap_{j\in J}f^{-j}(Y_j)\neq \emptyset$. We recall that set $I \subseteq \N$ has positive density if
$\lim_{n \rightarrow \infty} \frac{|I \cap [1,n]|}{n} >0$. We observe a simple but important and and useful
fact that if $I$ is an i.s.p.d. for ${\mathcal A}$ then for all $k \in \Z$, $k+I$ is an i.s.p.d for ${\mathcal A}$.

We now recall the definition of IE-tuple. Let $(x_1, \ldots, x_n)$ be a sequence of points in $X$.
We say that \emph{ $(x_1, \ldots, x_n)$ is a IE-tuple for $f$} if whenever $A_1, \ldots ,A_n$ are open sets
containing $x_1, \ldots, x_n$, respectively, we have that the collection ${\mathcal A} =\{A_1,
\ldots, A_n\}$ has an independence set with positive density.  In the case that $n=2$, we use
the term IE-pair.  (We drop the phrase ``for $f$"  and simply say IE-tuple or IE-pair when the mapping is clear from the context.) 
We use \emph{ $IE_k$} to denote the set of all IE-tuples of length $k$.
The diagonal of $X^k$ is denoted by $\Delta_k(X)$ or $\Delta_k$ if the underlying space
is clear from the context.
We will freely use the following facts from the local entropy theory. 
\begin{fact}\label{entfacts} \cite{kerrli} [Propositions, 3.8, 3.9]
Let $X$ be a compact metric space and $f: X \rightarrow X$ be a map.
\begin{enumerate}
\item Let $(A_1, \ldots, A_k)$ be a tuple  of closed subsets of $X$ which has an independent set
of positive density. Then, there is and IE-tuple $(x_1, \ldots, x_k)$ with $x_i \in A_i$
for $1 \le i \le k$.
\item $h_{top}(f) >0$ if and only if $f$ has an IE-pair $(x_1, x_2)$ with $x_1 \neq x_2$.
\item $IE_k$ is closed and $f \times \ldots \times f$ invariant subset of $X^k$.
\item If $(A_1, \ldots, A_k)$ has an i.s.p.d. and, for $1 \le i \le k$,
${\mathcal A}_i$ is a finite collection of sets such that $A_i \subseteq \cup{\mathcal A}_i$, 
then there is $A'_i \in {\mathcal A}_i$ such that $(A'_1, \ldots, A'_k)$ has an i.s.p.d. 
\end{enumerate}
\end{fact}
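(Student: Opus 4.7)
The plan is to handle items (3), (4), (1), (2) roughly in that order, since (3) is a direct unraveling of the definition, (4) is the key pigeonhole refinement, (1) reduces to (4) by a compactness argument, and (2) is the substantive link between entropy and independence. For (3), closedness of $IE_k$ follows from the observation that if $(x_1^{(n)},\ldots,x_k^{(n)})\to(x_1,\ldots,x_k)$ with each term in $IE_k$ and $A_i$ is an open neighborhood of $x_i$, then eventually $A_i$ also contains $x_i^{(n)}$, so the i.s.p.d.\ property witnessed by the $n$-th tuple transfers directly to $(A_1,\ldots,A_k)$. For $f\times\cdots\times f$ invariance, if $A_i\ni f(x_i)$ then $f^{-1}(A_i)\ni x_i$, so an i.s.p.d.\ $J$ for $(f^{-1}(A_1),\ldots,f^{-1}(A_k))$ shifts, via the translation invariance observed in the text, to an i.s.p.d.\ $J+1$ for $(A_1,\ldots,A_k)$; hence $(f(x_1),\ldots,f(x_k))\in IE_k$.

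For (4), I would iterate a pigeonhole argument one coordinate at a time. Let $I$ be a positive-density independence set for $(A_1,\ldots,A_k)$. For each finite $J\subseteq I$ and each $\omega:J\to\{1,\ldots,k\}$, choose a witness $x_{J,\omega}\in\bigcap_{j\in J}f^{-j}(A_{\omega(j)})$. Since $f^j(x_{J,\omega})\in A_{\omega(j)}\subseteq\bigcup\mathcal{A}_{\omega(j)}$ and each $\mathcal{A}_i$ is finite, a Banach-density pigeonhole along a F{\o}lner-type sequence of finite subsets of $I$ selects some $A_1'\in\mathcal{A}_1$ together with a positive-density subset $I'\subseteq I$ such that $(A_1',A_2,\ldots,A_k)$ retains $I'$ as an i.s.p.d.; iterating the same refinement over $i=2,\ldots,k$ yields (4).

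Item (1) then follows from (4) by a shrinking scheme. Fix a metric on $X$. At stage $n$, cover the previously-obtained closed sub-tuple $(A_1^{(n-1)},\ldots,A_k^{(n-1)})$ coordinatewise by finitely many closed sets of diameter less than $1/n$, and apply (4) to refine to $(A_1^{(n)},\ldots,A_k^{(n)})$ with $A_i^{(n)}\subseteq A_i^{(n-1)}$ still carrying an i.s.p.d. By compactness, $\bigcap_n A_i^{(n)}=\{x_i\}$ for some $x_i\in A_i$; any open $U_i\ni x_i$ contains $A_i^{(n)}$ for large $n$, so $(U_1,\ldots,U_k)$ inherits the i.s.p.d.\ of $(A_1^{(n)},\ldots,A_k^{(n)})$, exhibiting $(x_1,\ldots,x_k)$ as the required IE-tuple with $x_i\in A_i$.

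The main obstacle is (2). The easy direction is routine: given a non-diagonal IE-pair $(x_1,x_2)$, pick disjoint open $A_1\ni x_1$ and $A_2\ni x_2$ together with smaller closed neighborhoods $C_i\subseteq A_i$ of $x_i$, and use the i.s.p.d.\ $I$ for $(A_1,A_2)$ to show that the two-element open cover $\mathcal{U}=\{X\setminus C_1,\,X\setminus C_2\}$ satisfies $N\bigl(\bigvee_{j\in J}f^{-j}\mathcal{U}\bigr)\ge 2^{|J|}$ for every finite $J\subseteq I$, because the $2^{|J|}$ sets $\bigcap_{j\in J}f^{-j}(C_{\omega(j)})$ are nonempty and pairwise disjoint and no two can be swept into a single join element of $\mathcal{U}$; this forces $h_{top}(f)\ge h_{top}(\mathcal{U},f)>0$. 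The forward direction is the core of Kerr--Li's theorem: starting from a cover $\mathcal{V}$ with $h_{top}(\mathcal{V},f)>0$, a Sauer--Shelah/Karpovsky--Milman counting argument converts exponential growth of essential join patterns into an abundance of large ``approximately independent'' subsets of $\N$ for some pair of members of $\mathcal{V}$; a density-upgrading step then extracts a single i.s.p.d.\ for a suitable pair of closed subsets of these members, and invoking (1) produces the required non-diagonal IE-pair. This last upgrade---passing from many large independence sets to one of genuinely positive density---is where the essential combinatorial content resides and is the crux of the theorem.
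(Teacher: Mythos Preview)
The paper does not prove this statement at all: it is recorded as a \emph{Fact} with an explicit citation to Propositions~3.8 and~3.9 of Kerr--Li and then used as a black box throughout. So there is no ``paper's own proof'' to compare your proposal against.

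That said, your outline tracks the Kerr--Li arguments faithfully. Items~(3) and~(1) are handled correctly: closedness and invariance unwind from the definitions (using the translation-invariance of i.s.p.d.\ sets that the paper also notes), and the shrinking scheme reducing~(1) to~(4) is the standard compactness descent. Your easy direction of~(2) is fine; the sets $\bigcap_{j\in J}f^{-j}(C_{\omega(j)})$ are indeed pairwise disjoint (distinct $\omega$'s differ at some $j$, forcing intersection with disjoint $C_1,C_2$), and positive density of $I$ converts $2^{|J|}$ into exponential growth of $N$ along $[0,n-1]$. The one place your sketch is genuinely thin is~(4): the phrase ``Banach-density pigeonhole along a F{\o}lner-type sequence'' gestures at the right mechanism, but Kerr--Li's actual argument (their Lemma~3.3) is a quantitative splitting lemma showing that when one coordinate $A_i$ is replaced by a finite union, some summand retains a definite fraction of the independence density; your iteration-over-coordinates strategy is correct once that lemma is in hand. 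For the hard direction of~(2) you correctly identify the Sauer--Shelah/Karpovsky--Milman route and flag the density-upgrade as the crux, which is an honest summary of where the real work lies in \cite{kerrli}.
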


Blanchard \cite{blanchard} introduced the notion of \emph{uniform positive entropy (u.p.e)}.
We will obtain some stronger consequences when homeomorphism $h: X \rightarrow X$ has
the additional property of having u.p.e. As our definition of u.p.e, we use the following equivalent
formulation given in \cite{kerrli}: $h: X \rightarrow X$ has
u.p.e. if and only if every tuple of $X^2$ is an IE-tuple for $f$.

Inspired by the notion of IE-pair, we introduce concepts of $l$-zigzag and Z-pair which will facilitate our proofs.
Let $X$ be a continuum which is $G$-like for some graph $G$, $f: X \rightarrow X$, $U, V$ two subsets of $X$ and ${\mathcal G}$ an open cover of $X$. Let $l >1$ be odd. We say that a \emph{ chain $\{C_1, \ldots, C_n\} \subseteq {\mathcal G}$ is a
$l$-zigzag from $U$ to $V$} if there exists $1=k_1 < k_2 <\ldots k_{l+1} =n$ such that
\begin{itemize}
\item for all $i$ odd, $C_{k_i} \cap U\neq \emptyset$, 
\item for all $i$ even, $C_{k_i} \cap  V \neq \emptyset$, and
\item $\{C_{k_i}\cap U: 1 \le i  \le l+1, i \text { odd} \} \bigcup \{C_{k_i}\cap V: 1 \le i  \le l+1, i \text{ even} \}$ has an i.s.p.d. 
\end{itemize}

A pair \emph{ $(x, y)$ is a Z-pair for $f$} if for every open sets $U, V$, contains $x,y$ and for
every $\eps >0$ and for all odd $l \in \N$, we have that there is an $\eps$-cover ${\mathcal G}$ of $X$ whose
nerve is $G$
and a \textbf{free} chain $\{C_1, \ldots, C_n\} \subseteq {\mathcal G}$, with $x \in C_1, y \in C_n$, which is a
$l$-zigzag from $U$ to $V$. (We drop the phrase ``for $f$" and say simply Z-pair when the mapping is clear from the context.) We use $Z(X)$ to denote the set of Z-pairs subset of $X$.

\section{Z-pairs}
In this section we state some background and preparatory results. We also show that we can obtain
a Z-pair arbitrary close an IE-pair.  

\begin{prop}\label{sumset}Let $I \subseteq \N$ be a set with positive density and $n \in \N$. Then,
there is a finite set $F \subseteq I$ with $|F|=n$ and a positive density set $B$ such that
$F+ B \subseteq I$.
\end{prop}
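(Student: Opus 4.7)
My plan is to proceed by induction on $n$. The base case $n = 1$ is immediate: I would pick any $a \in I$ and set $F = \{a\}$, $B = \{m \in \N : m + a \in I\}$, which differs from the translate $I - a$ by only finitely many elements and therefore has the same density as $I$.

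For the inductive step, suppose we have $F_k = \{a_1, \dots, a_k\} \subseteq I$ of size $k$ together with a positive density set $B_k$ satisfying $F_k + B_k \subseteq I$, equivalently $B_k \subseteq \bigcap_{i=1}^k (I - a_i)$. It suffices to locate some $a_{k+1} \in I \setminus F_k$ such that $B_k \cap (I - a_{k+1})$ still has positive density, for then the step closes with $F_{k+1} = F_k \cup \{a_{k+1}\}$ and $B_{k+1} = B_k \cap (I - a_{k+1})$.

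To produce such an $a_{k+1}$, I would invoke a Khintchine-style recurrence fact: for any set $C \subseteq \N$ of positive density, the return-set $R(C) := \{a \in \N : C \cap (C - a) \text{ has positive density}\}$ is syndetic. One route is to pass through Furstenberg's correspondence principle, transferring $C$ to a positive-measure set in a measure-preserving system and then applying classical Khintchine recurrence; a more hands-on route is to double-count $\sum_{a \in [1,N]} |C \cap (C - a) \cap [1,N]|$ via Fubini and apply pigeonhole to extract many good shifts $a$. Applied with $C = B_k$, syndeticity of $R(B_k)$ forces $R(B_k) \cap (I \setminus F_k) \neq \emptyset$ since $I$ has positive density and $F_k$ is finite. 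As $B_k \subseteq I$, we get $B_k \cap (B_k - a_{k+1}) \subseteq B_k \cap (I - a_{k+1})$, so the latter inherits positive density, completing the step.

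The hard part will be reconciling the various notions of density: the paper's definition asks for the genuine limit $\lim_n |A \cap [1,n]|/n$ to exist and be positive, whereas recurrence-type tools naturally produce only positive upper (or upper-Banach) density. To bridge this, I would either insert a Ces\`{a}ro-averaging/subsequence step at each inductive stage to upgrade the output, or thin $B_k$ along a suitable arithmetic progression on which $\lim_n |B_k \cap [1,n]|/n$ exists and remains positive; either device should be compatible with the double-counting extraction of $a_{k+1}$ above.
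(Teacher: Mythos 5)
Your inductive step breaks at the crucial point, and your own construction supplies a concrete counterexample to the mechanism. Take $I = \{m \in \N : m \equiv 1 \pmod 3\}$, which has density $1/3$. In your base case $a_1 \in I$, so $B_1 = \{m : m+a_1 \in I\}$ is, up to finitely many elements, $\{m \equiv 0 \pmod 3\}$. Every difference of two elements of $B_1$ is divisible by $3$, so the return set $R(B_1)$ is contained in the multiples of $3$; it is indeed syndetic, but it is \emph{disjoint} from $I$. This kills the assertion that ``syndeticity of $R(B_k)$ forces $R(B_k) \cap (I \setminus F_k) \neq \emptyset$ since $I$ has positive density'': a syndetic set and a positive-density set can be disjoint (already the evens versus the odds show this), and recurrence theorems place return times in a syndetic set, not in $I$. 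A second, linked error: the containment ``$B_k \subseteq I$'' that you invoke is not part of your inductive hypothesis and is false in general, since $B_k \subseteq \bigcap_{i=1}^k (I - a_i)$ need not meet $I$ at all --- in the example above $B_1$ lies in $3\N$, disjoint from $I$. So even if you located $a_{k+1} \in R(B_k) \cap I$, the inclusion $B_k \cap (B_k - a_{k+1}) \subseteq B_k \cap (I - a_{k+1})$ would not follow. Note that the \emph{conclusion} of your step is true in this example ($B_1 \cap (I - a_2)$ is essentially $3\N$ for any $a_2 \in I$); it is your justification that collapses. The density-reconciliation issues you flag at the end are real but secondary to this.

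The repair --- and the paper's actual one-paragraph argument --- avoids hunting for return times inside $I$ altogether. Consider the whole family of translates $\{I_x = I - x : x \in I\}$; each has the same density $\delta$ as $I$ (the paper's definition of positive density requires the limit to exist, so translation preserves it). A double-counting/pigeonhole argument over this family --- essentially your Fubini computation, but run over translates indexed by elements of $I$ rather than over shifts of a single set $C$ --- shows that among infinitely many sets of the same positive density one can find $x_1, \ldots, x_n$ whose intersection $\bigcap_{i=1}^n I_{x_i}$ has positive lower density; one then takes $B$ to be a positive-density subset of this intersection and $F = \{x_1, \ldots, x_n\}$. The point is that the shifts $x_i$ are drawn from the index set $I$ itself, so $F \subseteq I$ comes for free, and all $n$ shifts are chosen \emph{simultaneously} by pigeonhole. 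Your one-at-a-time induction instead demands that each new shift be both a recurrence time for the previously built $B_k$ and an element of $I$, and that conjunction is exactly what can fail.
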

\begin{proof} A proof this can be obtain in a manner of Theorem 9 of \cite{mouron}. However, here we give a simple proof of this pointed out to us by Luca Zamboni. For each $x \in I$, let $I_x= I -x$.
Then, each $I_x$ has the same density as $I$. Then, $\{I_x: x \in I \}$ is an infinite collection of 
sets with the same density. Then, by the basic theory of densities, we can find $x_1, \ldots, x_n \in I$
such that $\cap _{i=1}^n I_{x_n}$ has positive lower density. Let $B \subseteq \cap _{i=1}^n I_{x_n}$
a set with positive density. Then, for each $1 \le i \le n$, we have that \[x_i+B \subseteq x_i +I_{x_i}
= x_i + (I -x_i)  =I.\]
Then, $B$ and $F=\{x_1, \ldots, x_n\}$ are the desired sets.
\end{proof}
\begin{prop}\label{enlargeindsets} Let $X$ be a compact metric space and $f:X \rightarrow X$.
Let ${\mathcal A}$ be a collection which has an i.s.p.d. and $n \in \N$. Then, there is a 
finite set $F$ with $|F|=n$ such that
\[{\mathcal A}_F = \{ \cap_{i \in F} f^{-1}(Y_i): Y_i \in {\mathcal A}\}
\]
has an i.s.p.d.
\end{prop}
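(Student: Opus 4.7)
The plan is to use Proposition \ref{sumset} together with a greedy sparsification step. Let $I \subseteq \N$ be an i.s.p.d.\ for ${\mathcal A}$. Applying Proposition \ref{sumset} to $I$ and $n$ produces $F = \{x_1, \ldots, x_n\} \subseteq I$ and a positive density set $B_0 \subseteq \N$ with $F + B_0 \subseteq I$.

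The key observation is the identity
\[ \bigcap_{j \in J} f^{-j}\!\Bigl(\bigcap_{i \in F} f^{-i}(Y_{i,j})\Bigr) \;=\; \bigcap_{(i,j) \in F \times J} f^{-(i+j)}(Y_{i,j}) \]
valid for any finite $J \subseteq \N$ and any $Y_{i,j} \in {\mathcal A}$. If the sums $i + j$ are all distinct as $(i,j)$ ranges over $F\times J$, and all lie in $I$, then this intersection is nonempty by the i.s.p.d.\ property of $I$, which is exactly what is required to witness that a suitable $B$ is an i.s.p.d.\ for ${\mathcal A}_F$.

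The containment $F + J \subseteq I$ comes for free when $J \subseteq B_0$, so the real work is arranging distinctness of the sums $i+j$; this is the main technical point. I would pass to $B \subseteq B_0$ by a greedy sparsification: with $M := \max F - \min F$, select successive elements of $B_0$ that are more than $M$ past the previous selection. Each chosen element blocks at most $M + 1$ elements of $B_0$, so $B$ retains positive (lower) density at least $\mathrm{density}(B_0)/(M+1)$. For distinct $j_1, j_2 \in B$, one has $|j_1 - j_2| > M \geq |i_2 - i_1|$ for any $i_1, i_2 \in F$, so the map $(i,j) \mapsto i + j$ from $F \times J$ to $\N$ is injective for every finite $J \subseteq B$.

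With this $B$ in hand, the verification is immediate from the identity above: any finite $J \subseteq B$ and any tuple from $\prod_{j \in J} {\mathcal A}_F$ expands, by injectivity of $(i,j) \mapsto i+j$, to an intersection $\bigcap_{k \in F + J} f^{-k}(W_k)$ with $F + J \subseteq I$ and each $W_k \in {\mathcal A}$, which is nonempty by the i.s.p.d.\ property of $I$. The delicate point really is collision avoidance: without the spacing, distinct pairs $(i_1, j_1), (i_2, j_2)$ with $i_1 + j_1 = i_2 + j_2$ would demand $f^{-k}(Y_{i_1,j_1}) \cap f^{-k}(Y_{i_2,j_2}) \neq \emptyset$ for unrelated ${\mathcal A}$-sets, a situation the i.s.p.d.\ hypothesis on ${\mathcal A}$ does not control.
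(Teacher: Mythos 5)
Your proof is correct, and its skeleton is exactly the paper's: invoke Proposition~\ref{sumset} to get $F$ and a positive density $B_0$ with $F+B_0\subseteq I$, then use the identity $\bigcap_{j\in J}f^{-j}\bigl(\bigcap_{i\in F}f^{-i}(Y_{i,j})\bigr)=\bigcap_{(i,j)\in F\times J}f^{-(i+j)}(Y_{i,j})$ and the independence of $I$. Where you depart is the sparsification step, and you are right that it is not pedantry but a repair of a real gap: the paper takes $B=B_0$ and applies the independence property of $I$ directly, with no attention to collisions, yet the definition of independence set permits only \emph{one} member of $\mathcal{A}$ per exponent. If $i_1+j_1=i_2+j_2=k$ with $Y_{i_1,j_1}\neq Y_{i_2,j_2}$, the right-hand side contains $f^{-k}(Y_{i_1,j_1})\cap f^{-k}(Y_{i_2,j_2})=f^{-k}(Y_{i_1,j_1}\cap Y_{i_2,j_2})$, which the i.s.p.d.\ hypothesis does not control and which is empty outright in the paper's principal application (Lemma~\ref{zigzag}, where $\mathcal{A}=\{A_0,A_1\}$ are disjoint closed sets). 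Nothing in the construction of Proposition~\ref{sumset} excludes such collisions (e.g.\ $I=\N$, $F=\{1,2\}$, $B_0=\N$ gives $1+2=2+1$). Your greedy thinning with gaps exceeding $M=\max F-\min F$ makes $(i,j)\mapsto i+j$ injective on $F\times J$ for every finite $J\subseteq B$, and your density accounting is sound: each selected element absorbs at most $M+1$ elements of $B_0$, so the lower density drops by at most the factor $M+1$ and remains positive, which is all the i.s.p.d.\ notion requires. (One cosmetic point: the statement's $f^{-1}(Y_i)$ is a typo for $f^{-i}(Y_i)$, and your reading is the intended one, consistent with the paper's own proof.) So your write-up is not merely a correct reconstruction of the published argument; it strengthens it at the one point where the paper is too quick, at no cost beyond a harmless loss in density.
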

\begin{proof} Let $I$ be an i.s.p.d for ${\mathcal A}$. Let $F, B$ be as in Lemma~\ref{sumset}. We claim
that $B$ is an independent set for ${\mathcal A}_F$.  Indeed, let $J \subseteq B$ be any finite set
and $Z_j \in {\mathcal A}_F$ for $j \in J$. By definition $Z_j = \cap_{i \in F} f^{-i}(Y^j_i)$
where $Y^j_i \in {\mathcal A}$. 
Then,
\[\cap_{j \in J}f^{-j}(Z_j) = \cap_{j \in J}f^{-j}(\cap_{i \in F} f^{-i}(Y^j_i)) = \cap_{j \in J}\cap_{i \in F} f^{-(i+j)}(Y^j_i).
\]
As $F+J \subseteq I$, $Y^j_i \in {\mathcal A}$, and $I$ an independent set for ${\mathcal A}$,
we have that  
\[\cap_{j \in J}\cap_{i \in F} f^{-(i+j)}(Y^j_i) \neq \emptyset,
\]
completing the proof.
\end{proof}
In the following proposition, if $\sigma \in \{0,1\}^n$, we write $\sigma=(\sigma(1),\sigma(2),...\sigma(n))$,  where $\sigma(i) \in \{0,1\}$. 
\begin{prop}\label{combo}Let $ l, n \ge 1$,  and $\sigma_1, \ldots, \sigma_{(n+2)(n+1)^{l-1}} $ be distinct elements of $\{0,1\}^n$.
Then, there are $i, \ 1\le k_1<k_2 < k_3 <  \ldots < k_{2^l} \le {(n+2)(n+1)^{l-1}}$ such that 
$\sigma_{k_j}(i) =0$ for $j$ odd and $\sigma_{k_j}(i) =1$ for $j$ even.
\end{prop}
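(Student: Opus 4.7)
The plan is to proceed by induction on $l$.

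For the base case $l = 1$, I want to show that among any $n + 2$ distinct strings in $\{0,1\}^n$ there are a coordinate $i$ and indices $k_1 < k_2$ with $\sigma_{k_1}(i) = 0$ and $\sigma_{k_2}(i) = 1$. I would argue by contradiction: if no such $i$ exists, then for every coordinate $i$ the sequence $\sigma_1(i), \sigma_2(i), \ldots, \sigma_{n+2}(i)$ has no $0$ preceding a $1$, so it must look like $1, 1, \ldots, 1, 0, 0, \ldots, 0$, determined by a threshold $t_i \in \{0, 1, \ldots, n+2\}$. Each $\sigma_j$ is then recovered from the set $S_j = \{i : \sigma_j(i) = 1\} = \{i : t_i \ge j\}$, and these sets form a nested chain $S_1 \supseteq S_2 \supseteq \cdots \supseteq S_{n+2}$ in $\{1, \ldots, n\}$. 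Since any strictly decreasing chain of subsets of an $n$-element set has length at most $n + 1$, there are at most $n + 1$ distinct $S_j$ and hence at most $n + 1$ distinct $\sigma_j$, contradicting the hypothesis that all $n + 2$ strings are distinct.

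For the inductive step, assuming the claim for $l - 1 \ge 1$, I would partition the index set $\{1, 2, \ldots, (n+2)(n+1)^{l-1}\}$ into $n + 1$ consecutive blocks $J_1 < J_2 < \cdots < J_{n+1}$, each of length $(n+2)(n+1)^{l-2}$. The strings indexed by each $J_p$ remain distinct, so the inductive hypothesis furnishes, for each $p$, a witness coordinate $i_p \in \{1, \ldots, n\}$ together with $2^{l-1}$ indices in $J_p$ whose $i_p$-coordinates realize the alternating pattern $0, 1, 0, 1, \ldots, 0, 1$. Since there are $n + 1$ blocks but only $n$ coordinates, the pigeonhole principle produces two blocks $J_p, J_q$ with $p < q$ sharing a common witness coordinate $i$. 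Concatenating the $2^{l-1}$ indices from $J_p$ with the $2^{l-1}$ indices from $J_q$ yields a strictly increasing list of $2^l$ indices, all with the $i$-th coordinate alternating as required.

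The main subtlety, and the step I expect to require the most care, is the parity check on the concatenation. For $l \ge 2$ we have $2^{l-1}$ even, so an alternating sequence of that length starting with $0$ also ends with $1$; pasting a second sequence beginning with $0$ immediately after it therefore produces the pattern $0, 1, \ldots, 0, 1, 0, 1, \ldots, 0, 1$ of length $2^l$, with $0$ at every odd position and $1$ at every even position. This is exactly why the statement is phrased for lengths that are powers of $2$ and why the pattern is required to start with $0$: those conventions make the inductive doubling clean and automatic.
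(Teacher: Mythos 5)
Your proof is correct, but it takes a genuinely different route from the paper's at both stages. For the base case $l=1$, the paper argues directly: for each of the $n+1$ consecutive pairs $(\sigma_j,\sigma_{j+1})$ it picks a coordinate where they differ, pigeonholes two of these $n+1$ coordinates onto a common value $i$, and extracts a $0$ from the earlier pair and a $1$ from the later one. Your contradiction argument instead observes that avoiding the pattern forces every column to have the monotone form $1,\ldots,1,0,\ldots,0$, so the strings are determined by a weakly decreasing chain of subsets of $\{1,\ldots,n\}$, which contains at most $n+1$ distinct sets; this is a slicker extremal argument, and it has the side benefit of showing the threshold $n+2$ is sharp (the chain $1^j0^{n-j}$, $0\le j\le n$, realizes $n+1$ pattern-free distinct strings). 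For the general case the paper does not induct on $l$ at all: it splits the sequence once into $(n+1)^{l-1}$ blocks of length $n+2$, extracts from each block a coordinate together with an ordered $(0,1)$ pair in that coordinate via the base case, and then invokes a generalized pigeonhole fact (any map from $(n+1)^k$ objects to $n$ values has a fiber of size at least $2^k$, with proof left as an easy induction) to find $2^{l-1}$ blocks sharing one coordinate; concatenating the $(0,1)$ pairs then alternates automatically, with no parity bookkeeping, since each unit already reads ``$0$ then $1$.'' Your recursive doubling --- $n+1$ blocks of length $(n+2)(n+1)^{l-2}$, ordinary pigeonhole, merge two alternating runs of length $2^{l-1}$ --- in effect unrolls the induction hidden inside the paper's generalized pigeonhole fact, at the modest cost of the parity check you correctly flag (and correctly resolve: $2^{l-1}$ is even for $l\ge2$, so the first run ends in $1$ and the positions in the second run keep their parity). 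Both arguments are complete and yield exactly the stated bound $(n+2)(n+1)^{l-1}$.
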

\begin{proof} 
Let us first consider the case when $l=1$. For each $1\le j \le n+1 $, let 
$1 \le i_j  \le n$ be such that $\sigma_j(i_j) \neq \sigma
_{j+1}(i_j)$. This is possible since all the elements of the sequence are distinct. Now,
by the pigeonhole principle, there are $1 \le m < m' \le n+1$ such that $i_m = i_{m'}$. 
Let $i=i_m$.
Note that we have that  $m < m+1 \le m' < m'+1$ and $\sigma_m(i) \neq \sigma_{m+1}(i)$
and $\sigma_{m'}(i) \neq \sigma_{m'+1}(i)$. Since all $\sigma$'s take values of $0$ or $1$,
there is $k_1 \in \{m, m+1\}$ such that $\sigma_{k_1}(i)=0$ and there is $k_2 \in \{m', m'+1\}$
such that $\sigma_{k_2}(i) =1$.

To prove the general case, recall the elementary fact that if we have a function from a set of 
$(n+1)^k$ objects into $n$ objects, then at least $2^{k}$ many objects must map to the same value.
(This can be easily proved by the pigeonhole principle and induction.) Now, in the general case,
we decompose our sequence of length $(n+2)(n+1)^{l-1}$ into $(n+1)^{l-1}$ blocks, each
block consisting of  $(n+2)$ consecutive terms from the original sequence.  For each block we obtain $1 \le i \le n$ and $k< k'$ which satisfy the condition that $\sigma_k(i) =0$ and $\sigma_{k'}(i)=1$. Then, we must have $2^{l-1}$
many blocks which must satisfy our theorem for the same value of $i$. As each block contains
$k<k'$ with $\sigma_{k}(i)=0$ and $\sigma_{k'}(i)=1$, the proof is complete.
\end{proof}
In the following proposition, ${\mathcal A}_F$ is as defined in Proposition~\ref{enlargeindsets}.
\begin{lem}\label{zigzagprep} Let $X$ be a continuum, $f: X \rightarrow X$ be a homeomorphism, 
${\mathcal A}= \{A_0,A_1\}$ be subsets of $X$, $F \subseteq \N$ with $|F| =n$.
Furthermore, assume that ${\mathcal C}$ is a chain consisting of open subsets of $X$ such
that each element of ${\mathcal C}$ intersects at most one element
of ${\mathcal A}_F$ and that there is a subcollection ${\mathcal C}'$
of ${\mathcal C}$ of cardinality  at least $(n+2)(n+1)^{l-1}$ such that $\{L \cap (\cup 
{\mathcal A}_F): L \in {\mathcal C}'\}$ has an i.s.p.d.
Then, there is a subchain
${\mathcal D}$ of ${\mathcal C}$ and an $i \in F$ such that $f^i({\mathcal D})$
is a $(2^l-1)$-zigzag chain from $A_0$ to $A_1$.
\end{lem}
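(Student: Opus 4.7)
The plan is to attach to each $L\in{\mathcal C}'$ a combinatorial label in $\{0,1\}^F$ recording which element of ${\mathcal A}_F$ it meets, apply Proposition~\ref{combo} to that label sequence to isolate a single coordinate $i\in F$ along which an alternation $0,1,0,1,\ldots$ appears at $2^l$ positions, and then push the whole thing through the homeomorphism $f^i$ to produce the desired $(2^l-1)$-zigzag from $A_0$ to $A_1$.

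To set up the labels, observe that the i.s.p.d.\ hypothesis prevents $L\cap(\cup{\mathcal A}_F)$ from being empty for any $L\in{\mathcal C}'$ (otherwise the collection would contain $\emptyset$ and fail i.s.p.d.), and the at-most-one-intersection assumption then yields a unique $\sigma_L\in\{0,1\}^F$ with $L\cap B_{\sigma_L}\neq\emptyset$, where $B_\sigma:=\bigcap_{m\in F}f^{-m}(A_{\sigma(m)})$. List ${\mathcal C}'$ as $L_1,\ldots,L_N$ in chain order ($N\geq(n+2)(n+1)^{l-1}$), identify $F$ with $\{1,\ldots,n\}$, and apply Proposition~\ref{combo} to the label sequence $\sigma_{L_1},\ldots,\sigma_{L_N}$ to obtain indices $1\leq k_1<k_2<\cdots<k_{2^l}\leq N$ and a coordinate $i\in F$ with $\sigma_{L_{k_j}}(i)=0$ for odd $j$ and $\sigma_{L_{k_j}}(i)=1$ for even $j$. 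Take ${\mathcal D}$ to be the consecutive subchain of ${\mathcal C}$ from the link $L_{k_1}$ to the link $L_{k_{2^l}}$.

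Because $f$ is a homeomorphism, $f^i({\mathcal D})$ is again a chain of open sets. The inclusion $f^i(B_\sigma)\subseteq A_{\sigma(i)}$ yields $f^i(L_{k_j})\cap A_{\sigma_{L_{k_j}}(i)}\supseteq f^i(L_{k_j}\cap B_{\sigma_{L_{k_j}}})\neq\emptyset$, so the link of $f^i({\mathcal D})$ corresponding to $L_{k_j}$ meets $A_0$ for odd $j$ and $A_1$ for even $j$. The i.s.p.d.\ condition transfers as follows: the subfamily $\{L_{k_j}\cap B_{\sigma_{L_{k_j}}}\}_j$ inherits i.s.p.d.\ from ${\mathcal C}'$; applying $f^i$ passes i.s.p.d.\ to $\{f^i(L_{k_j}\cap B_{\sigma_{L_{k_j}}})\}_j$ via the translation-invariance of independence sets recorded in Section~2; and enlargement to supersets preserves i.s.p.d., so $\{f^i(L_{k_j})\cap A_{\sigma_{L_{k_j}}(i)}\}_j$ is i.s.p.d.\ as well, completing the verification that $f^i({\mathcal D})$ is a $(2^l-1)$-zigzag from $A_0$ to $A_1$.

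The delicate point is that Proposition~\ref{combo} is stated for pairwise distinct labels, whereas the labels $\sigma_{L_k}$ in our sequence could in principle repeat. Fortunately the proof of Proposition~\ref{combo} really only uses that \emph{consecutive} labels within each block of size $n+2$ differ, so the natural fix is to first thin the label sequence by collapsing each maximal constant run to a single representative; the step requiring care is to check that after this collapse the sequence still has length at least $(n+2)(n+1)^{l-1}$. I expect this combinatorial bookkeeping, rather than the transfer-through-$f^i$ step, to be the principal obstacle in turning the sketch into a careful proof.
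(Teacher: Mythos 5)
Your proposal follows the paper's own proof essentially step for step: the same labelling of each $L\in{\mathcal C}'$ by the $\sigma_L\in\{0,1\}^F$ with $L\cap B_{\sigma_L}\neq\emptyset$ (where $B_\sigma=\cap_{s\in F}f^{-s}(A_{\sigma(s)})$), the same application of Proposition~\ref{combo} to extract a single coordinate $i\in F$ and $2^l$ alternating positions, the same choice of ${\mathcal D}$ as the consecutive subchain of ${\mathcal C}$ from $L_{k_1}$ to $L_{k_{2^l}}$, and the same transfer of the i.s.p.d.\ property through $f^i$ (the paper invokes the fact that a homeomorphic image of a collection with an i.s.p.d.\ again has one, which is exactly your translation-invariance argument; passing to subfamilies and supersets is equally harmless). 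Your observation that each $L\in{\mathcal C}'$ must meet \emph{exactly} one element of ${\mathcal A}_F$, because a collection containing $\emptyset$ cannot have an i.s.p.d., is in fact slightly more careful than the paper, which passes from ``at most one'' to ``exactly one'' without comment.

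The one point of divergence is the distinctness of the labels, and there your proposed patch fails while the paper takes a blunter route: its proof simply asserts ``by hypothesis we have that $\sigma_1,\ldots,\sigma_t$ is a sequence of distinct elements,'' i.e., it reads label-distinctness into the hypotheses. That reading is what the lemma's sole application supplies: in Lemma~\ref{zigzag}, ${\mathcal C}'$ arises from Fact~\ref{entfacts}(4) applied to the $2^n$ pairwise disjoint elements of ${\mathcal A}_F$, one link per element, and since each link meets at most one $B_\sigma$, distinct links of ${\mathcal C}'$ automatically carry distinct labels. Your run-collapsing fix, by contrast, is a dead end. It does yield a sequence in which \emph{adjacent} labels differ --- and you are right that adjacent-distinctness within each block is all the proof of Proposition~\ref{combo} actually uses --- but the length bound $(n+2)(n+1)^{l-1}$ is irrecoverable after collapsing: if every $L\in{\mathcal C}'$ met the same $B_\sigma$, the collapsed sequence has length one, and no coordinate $i$ exhibits any $0$--$1$ alternation at all, so no combinatorial bookkeeping can succeed from the hypotheses as you (literally, and correctly) read them. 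Note also that the i.s.p.d.\ hypothesis on $\{L\cap(\cup{\mathcal A}_F):L\in{\mathcal C}'\}$ is perfectly consistent with all members of ${\mathcal C}'$ meeting one and the same element of ${\mathcal A}_F$, so distinctness cannot be derived; the correct repair is to recognize distinctness as an implicit hypothesis (as the paper does, and as its application warrants), not to thin the label sequence.
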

\begin{proof} Let ${\mathcal C}= \{C_1, \ldots, C_k\}$
and ${\mathcal C}' = \{C_{m_1}, C_{ m_2}, \ldots, C_{m_t}\}$.
As each $C_{m_j}$ intersects  exactly one element
of ${\mathcal A}_F$, we may chose $\sigma_j \in \{0,1\}^F$ such that $C_{m_j} \cap 
\left ( \cap_{s \in F}f^{-s}(A_{\sigma_j(s)})  \right )\neq \emptyset$. By hypothesis
we have that $\{\sigma_1, \ldots, \sigma_t\}$ is a sequence of distinct elements of 
$\{0,1\}^F$ whose length is at least $(n+2) (n+1)^{l-1}$. Hence, by Proposition~\ref{combo}
we have that there is $i \in F$ and a sequence $ 1 \le p_1 < p_2 \ldots < p_{2^l}$
such that $\sigma_{p_j}(i)  =0$ for $j$ odd and $\sigma_{p_j} (i) =1$ for $j$ even.
Now let ${\mathcal D}$ be the subchain of ${\mathcal C}$ defined by
$\{C_{m_{p_1}} , C_{m_{p_1}+1}, \ldots , C_{m_{p_{2^l}}}\}$. 

Let us now observe that ${\mathcal D}$ has the desired property. Let $j$ be odd. By
definition of $\sigma$'s, we have that $ C_{m_{p_j}} \cap f^{-i}(A_{\sigma_{p_j}(i)}) \neq 
\emptyset$. As $\sigma_{p_j}(i) =0$, we have that $f ^i (C_{m_{p_j}}) \cap A_0 \neq
\emptyset$, for $j$ odd. A similar argument show that for $j$ even, $f ^i (C_{m_{p_j}}) \cap A_1 \neq 
\emptyset$. 

That $\{ f ^i(C_{m_{p_j}}) \cap A_0: j \mbox{ odd }, 1 \le j \le 2^l\} \cup \{ f ^i(C_{m_{p_j}}) \cap A_1: j \mbox{ even}, 1 \le j \le 2^l\}$ has an i.s.p.d follows from the fact that 
$\{L \cap (\cup 
{\mathcal A}_F): L \in {\mathcal C}'\}$ has an i.s.p.d and that the homeomorphic image of a 
collection with an i.s.p.d is again a collection which has an i.s.p.d.
\end{proof}
\begin{lem}\label{zigzag} Let $X$ be a continuum which is $G$-like for some graph $G$, 
$f:X \rightarrow X$ a homeomorphism, $\eps >0$ and $l >1$ odd.  If $(A_0,A_1)$ is
a disjoint pair of closed sets which has an i.s.p.d., then there is a $\eps$-cover ${\mathcal H}$ of $X$ whose nerve is $G$
and a free chain ${\mathcal E } \subseteq {\mathcal H}$ such that ${\mathcal E}$ is an $l$-zigzag from $A_0$ to 
$A_1$. 
\end{lem}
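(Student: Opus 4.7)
The strategy combines Proposition~\ref{enlargeindsets} with a pigeonhole over the finitely many edges of $G$, invokes Lemma~\ref{zigzagprep}, and finally trims the $(2^m-1)$-zigzag returned down to an $l$-zigzag. Fix an integer $m\ge 2$ with $2^m\ge l+1$, and then $n\in\N$ so large that $2^n/|E(G)|\ge (n+2)(n+1)^{m-1}$.

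Apply Proposition~\ref{enlargeindsets} to $\mathcal{A}=\{A_0,A_1\}$ to produce $F\subseteq\N$ with $|F|=n$ such that the $2^n$ pairwise disjoint closed sets $Z_\sigma:=\bigcap_{i\in F}f^{-i}(A_{\sigma(i)})$, $\sigma\in\{0,1\}^F$, form an i.s.p.d.\ collection $\mathcal{A}_F$. Let $\delta>0$ be less than the minimum pairwise distance among the $Z_\sigma$, and, using uniform continuity of $f^i$ together with finiteness of $F$, let $\eta>0$ be so small that $f^i$ maps sets of diameter less than $\eta$ to sets of diameter less than $\eps$, for every $i\in F$. By $G$-likeness of $X$, pick an open cover $\mathcal{H}_0$ of $X$ with nerve $G$ (suitably subdivided), mesh less than $\min(\delta,\eta)$, and closures of non-adjacent elements disjoint. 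Then every element of $\mathcal{H}_0$ meets at most one $Z_\sigma$, and the cover elements along any single edge of $G$ form a \emph{free} chain in $\mathcal{H}_0$ (the interior chain elements sit at subdivision vertices of degree~$2$).

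Apply Fact~\ref{entfacts}(4) to the tuple $(Z_\sigma)_\sigma$ with the finite covers $\{H\cap Z_\sigma: H\in\mathcal{H}_0,\ H\cap Z_\sigma\ne\emptyset\}$ of each $Z_\sigma$, obtaining for each $\sigma$ some $H_\sigma\in\mathcal{H}_0$ such that $(H_\sigma\cap Z_\sigma)_\sigma$ still has i.s.p.d. Fineness makes the $H_\sigma$'s $2^n$ distinct elements of $\mathcal{H}_0$. Assign each $H_\sigma$ to one edge of $G$ along which its nerve vertex lies; pigeonhole yields an edge $e$ assigned at least $2^n/|E(G)|\ge (n+2)(n+1)^{m-1}$ of the $H_\sigma$'s. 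Let $\mathcal{C}$ be the free chain of $\mathcal{H}_0$ along $e$ and $\mathcal{C}'\subseteq\mathcal{C}$ the subcollection of these $H_\sigma$'s. Since $H_\sigma\cap\bigcup\mathcal{A}_F=H_\sigma\cap Z_\sigma$ by fineness, the collection $\{L\cap\bigcup\mathcal{A}_F: L\in\mathcal{C}'\}$ has i.s.p.d.\ as a subcollection of $(H_\sigma\cap Z_\sigma)_\sigma$, and $|\mathcal{C}'|\ge (n+2)(n+1)^{m-1}$.

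Lemma~\ref{zigzagprep}, with its internal parameter set equal to $m$, then produces a subchain $\mathcal{D}\subseteq\mathcal{C}$ and an index $i\in F$ such that $f^i(\mathcal{D})$ is a $(2^m-1)$-zigzag from $A_0$ to $A_1$ inside the cover $\mathcal{H}:=f^i(\mathcal{H}_0)$; this $\mathcal{H}$ is an $\eps$-cover with nerve $G$ (the mesh bound comes from the choice of $\eta$, and the homeomorphism $f^i$ preserves nerves). Let $\mathcal{E}$ be the contiguous subchain of $f^i(\mathcal{D})$ spanning precisely the first $l+1$ zigzag positions; since $l+1$ is even, $\mathcal{E}$ starts at an $A_0$-intersection and ends at an $A_1$-intersection, it is free (a contiguous subchain of a free chain is free), and its zigzag collection inherits i.s.p.d.\ as a subcollection of the one for $f^i(\mathcal{D})$. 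The step I expect to be most delicate is the construction of $\mathcal{H}_0$: one must arrange simultaneously that the nerve is $G$, the mesh is very small, enough cover elements sit along each edge of $G$, and closures of non-adjacent elements are disjoint---a standard but technical feature of $G$-like continua that requires care.
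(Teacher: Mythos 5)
Your proof is correct and takes essentially the same route as the paper's: Proposition~\ref{enlargeindsets} to blow the pair up to $2^n$ pairwise disjoint sets with an i.s.p.d., a sufficiently fine cover whose nerve is (a subdivision of) $G$, Fact~\ref{entfacts}(4) followed by a pigeonhole over the edges of $G$, Lemma~\ref{zigzagprep}, and pushing forward by the homeomorphism $f^i$. The only differences are bookkeeping: the paper runs Lemma~\ref{zigzagprep} with parameter $l$ itself and leaves implicit the passage from the resulting $(2^l-1)$-zigzag to an $l$-zigzag, whereas you decouple the parameter as $m$ with $2^m\ge l+1$ and trim explicitly --- a minor tightening, not a different argument.
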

\begin{proof}Let $|G|$ be the number of edges of $G$. Choose $n$ large enough so that $2^n > |G| (n+2)(n+1)^{l-1}$. Let $I$ be an independent set  of positive density for $(A_0,A_1)$. Applying Lemma~\ref{sumset}, we obtain $F$
and $B$ as in the conclusion of that lemma. As $A_0,A_1$ are disjoint closed sets which has an i.s.p.d.,
we have that ${\mathcal A}_F$ is a pairwise disjoint collection of nonempty closed sets of cardinality $2^n$. 
Moreover, ${\mathcal A}_F$ has an i.s.p.d., namely $B$. Let $\delta >0$ be small enough so that $\delta < \eps$ and every pair of distinct elements of ${\mathcal A}_F$ are at least $\delta$ apart.  
Moreover, by the continuity of $f$, we may choose $\delta$  sufficiently small so that that
if $U \subseteq X$ has diameter less than $\delta$, then $f^i(U)$ has diameter less than $\eps$ for all
$i \in F$. As $X$ is $G$-like, we may choose an $\delta$-cover ${\mathcal G}$ of $X$ whose nerve is $G$.  

Note that each element of ${\mathcal G}$ intersects at most one element of ${\mathcal A}_F$. 
Applying Fact~\ref{entfacts} (4), we obtain a subcollection ${\mathcal G'}$ of ${\mathcal G}$ of 
cardinality $2^n$ such that $\{ g' \cap  \left (\cup {\mathcal A}_F \right ) : g' \in {\mathcal G'}\}$  has an i.s.p.d. Moreover, there exists a free chain in ${\mathcal G}$, corresponding to an edge of $G$, such that
this free chain intersects at least $2^n/|G| $ many element of ${\mathcal G'}$. Call this
free chain ${\mathcal C}$ and ${\mathcal C'}$ those element of ${\mathcal G'}$ belonging to ${\mathcal C}$. As $2^n/ |G| > (n+2)(n+1)^{l-1}$, we have that all the hypothesis of 
Lemma~\ref{zigzagprep} are satisfied. Hence by Lemma~\ref{zigzagprep}, we have that there is a subchain
${\mathcal D}$ of ${\mathcal C}$ and an $i \in F$ such that $f^i({\mathcal D})$
is a $(2^l-1)$-zigzag chain from $A_0$ to $A_1$. Let ${\mathcal H}=f^i({\mathcal G})$ and ${\mathcal E}= f^i({\mathcal D})$. 
\end{proof} 
\begin{lem}\label{zpair}
Let $X$ be a continuum which is $G$-like for some graph $G$, 
$f:X \rightarrow X$ a homeomorphism, and $(A_0,A_1)$
a disjoint pair of closed sets which has an i.s.p.d. Then, there is $x \in A_0$ and $y \in A_1$ such that
$(x,y)$ is a Z-pair.
\end{lem}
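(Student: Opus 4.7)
The plan is to use Fact~\ref{entfacts}(1) to extract an IE-pair inside $A_0\times A_1$ and then verify that this IE-pair satisfies the Z-pair definition by applying Lemma~\ref{zigzag} to small closed neighborhoods of the two points.

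Concretely, Fact~\ref{entfacts}(1) applied to the disjoint closed pair $(A_0,A_1)$ with an i.s.p.d.\ yields an IE-pair $(x,y)$ with $x\in A_0$ and $y\in A_1$; disjointness forces $x\ne y$. To show that $(x,y)$ is a Z-pair, fix open $U\ni x$, $V\ni y$, $\eps>0$, and odd $l$. Choose $\delta>0$ small enough that $\bar B(x,3\delta)\subseteq U$, $\bar B(y,3\delta)\subseteq V$, the two balls are disjoint, and $3\delta<\eps$. By the IE-pair property, the open pair $(B(x,\delta),B(y,\delta))$ has an i.s.p.d., and this lifts to its closed enlargement $(A_0^*,A_1^*):=(\bar B(x,\delta),\bar B(y,\delta))$. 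Apply Lemma~\ref{zigzag} to $(A_0^*,A_1^*)$ with parameters $\delta$ and $l$ to obtain a $\delta$-cover $\mathcal H$ of $X$ with nerve $G$ and a free chain $\mathcal E=\{C_1,\ldots,C_n\}\subseteq\mathcal H$ which is an $l$-zigzag from $A_0^*$ to $A_1^*$. Because cover elements have diameter less than $\delta$, each zigzag witness meeting $A_0^*$ lies inside $B(x,2\delta)\subseteq U$ and each witness meeting $A_1^*$ sits inside $V$; enlarging witnesses preserves i.s.p.d., so $\mathcal E$ is also an $l$-zigzag from $U$ to $V$, and $\mathcal H$ is an $\eps$-cover.

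The main technical obstacle is to ensure $x\in C_1$ and $y\in C_n$, since Lemma~\ref{zigzag} only gives $C_1\cap A_0^*\ne\emptyset$ and $C_n\cap A_1^*\ne\emptyset$. The plan to handle this is to extend $\mathcal E$ on both ends by adjoining short sub-chains of $\mathcal H$ connecting $C_1$ to the cover element $H_x\ni x$ and $C_n$ to $H_y\ni y$. Both $H_x$ and $C_1$ lie inside $B(x,2\delta)$, and since the nerve of $\mathcal H$ is the finite graph $G$, the sub-nerve on cover elements intersecting this small neighborhood is locally one-dimensional; for $\delta$ small enough it is a single arc, yielding a canonical sub-chain from $H_x$ to $C_1$ inside $U$, and likewise on the right for $y$. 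Adjoining these sub-chains produces a longer free chain whose new endpoints contain $x$ and $y$; the interior of the enlarged chain remains free because the added elements lie on the local arc and the original interior of $\mathcal E$ still has no external touchers. The $l$-zigzag condition from $U$ to $V$ is preserved by promoting $H_x\cap U$ and $H_y\cap V$ to the new first and last witnesses, and the i.s.p.d.\ of the augmented witness collection follows by combining the i.s.p.d.\ supplied by Lemma~\ref{zigzag} with the IE-pair property of $(x,y)$ (applied to the open neighborhoods $H_x\cap U$, $H_y\cap V$) via Fact~\ref{entfacts}(4).
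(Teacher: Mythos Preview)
Your strategy departs from the paper's: you fix an IE-pair $(x,y)\in A_0\times A_1$ via Fact~\ref{entfacts}(1) and attempt to verify the Z-pair definition for this specific pair, while the paper constructs $(x,y)$ by a nested iteration---applying Lemma~\ref{zigzag} repeatedly to a shrinking sequence of closed pairs $(A_0^m,A_1^m)$, each obtained from the endpoint witnesses $C_1^m\cap A_0^{m-1}$ and $C_{n_m}^m\cap A_1^{m-1}$ of the previous zigzag (which inherit an i.s.p.d.\ as a subcollection of the witness family), and taking the intersection point. In that scheme the endpoint conditions $x\in C_1^m$, $y\in C_{n_m}^m$ come out of the construction rather than from a post-hoc extension.

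Your proposed fix for the endpoint conditions has two genuine gaps. First, the claim that the sub-nerve of $\mathcal H$ near $x$ is ``a single arc'' for small $\delta$ is unsupported: Lemma~\ref{zigzag} gives you no control over where the branch elements of $\mathcal H$ (those of nerve-degree $\ge 3$, corresponding to vertices of $G$) fall relative to $x$. Nothing prevents $H_x$ itself from being such a branch element, and even if not, any nerve-path from $H_x$ to $C_1$ may run back through links of $\mathcal E$ or through branch points, so the extension need not produce a free chain at all. Second, your appeal to Fact~\ref{entfacts}(4) to get an i.s.p.d.\ for the augmented witness family is incorrect: that fact refines a single tuple already known to have an i.s.p.d.; it does not merge two tuples---the zigzag witnesses from Lemma~\ref{zigzag} on one hand and $(H_x\cap U,H_y\cap V)$ from the IE-property on the other---whose independence sets may be entirely unrelated. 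Note finally that your argument, were it valid, would prove that \emph{every} nondiagonal IE-pair is a Z-pair, whereas the paper only asserts density (Corollary~\ref{zpairdense}); this is a sign that the endpoint requirement cannot be handled by a local patch around a pre-chosen IE-pair.
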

\begin{proof}This lemma simply follows from a repeated application of Lemma~\ref{zigzag}.
\end{proof}
\begin{cor}\label{zpairdense} 
Let $X$ be a continuum which is $G$-like for some graph $G$
and $f:X \rightarrow X$ a homeomorphism with positive entropy. Then, arbitrarily close to every $(x, y) \in IE_2(X) \setminus \Delta_2(X)$, there is a Z-pair. 
\end{cor}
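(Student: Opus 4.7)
The plan is to deduce this corollary as a direct consequence of Lemma~\ref{zpair} by choosing suitable neighborhoods of $x$ and $y$ and invoking the definition of IE-pair. Fix $(x,y) \in IE_2(X) \setminus \Delta_2(X)$ and an arbitrary $\eta > 0$. Since $x \neq y$, I would choose disjoint open sets $U, V$ containing $x, y$, respectively, small enough so that $\overline U$ and $\overline V$ are disjoint, $\mathrm{diam}(U) < \eta$, and $\mathrm{diam}(V) < \eta$.

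Next I would apply the definition of IE-pair to $(x,y)$ with the open sets $U, V$: this yields that $\{U,V\}$ has an i.s.p.d. Since an i.s.p.d. for $\{U,V\}$ is clearly also an i.s.p.d. for the collection $\{\overline U, \overline V\}$ of their closures, I obtain a disjoint pair of closed sets $A_0 = \overline U$ and $A_1 = \overline V$ with an i.s.p.d.

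Now Lemma~\ref{zpair} applies directly: there exist $x' \in A_0$ and $y' \in A_1$ such that $(x',y')$ is a Z-pair for $f$. By our choice of $U$ and $V$, we have $d(x',x) < \eta$ and $d(y', y) < \eta$, so $(x',y')$ lies within $\eta$ of $(x,y)$ in $X \times X$. Since $\eta$ was arbitrary, Z-pairs accumulate at $(x,y)$.

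There is essentially no obstacle here, as the corollary is really a packaging statement: the heavy lifting—converting an independence set into a free zigzag chain in a $G$-like cover and iterating to obtain the full Z-pair condition—has already been carried out in Lemmas~\ref{zigzag} and~\ref{zpair}. The only small point to verify is that the definition of IE-pair (stated in terms of open neighborhoods) is compatible with the hypothesis of Lemma~\ref{zpair} (stated for closed sets with an i.s.p.d.), which is immediate from the monotonicity of the i.s.p.d.\ property under supersets.
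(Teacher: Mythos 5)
Your proposal is correct and follows essentially the same route as the paper: take small disjoint open neighborhoods of $x$ and $y$, pass to their closures $A_0, A_1$ (which inherit the i.s.p.d.\ from the IE-pair definition by monotonicity of independence under supersets), and apply Lemma~\ref{zpair} to produce a Z-pair within the prescribed distance of $(x,y)$. The paper's proof is just a terser version of this, leaving the closure/monotonicity point implicit where you spell it out.
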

\begin{proof} Let $(x, y) \in IE_2(X) \setminus \Delta_2(X)$ and let $A_0, A_1$ be two disjoint closed
sets which are the closures of open sets containing $x,y$, respectively. Then, by the definition of IE-pair, we have that $\{A_0,A_1\}$ has an i.s.p.d. Now the proof follows from Lemma~\ref{zpair}. 
\end{proof}
\section{Tree-like case}
In this section we prove that in the tree-like case, the existence of Z-pair implies indecomposability between every irreducible continuum between the Z-pair. As a corollary we obtain that positive entropy implies indecomposability. We need some definitions and a result due to Krasinkiewicz and Minc. 

Let $U$ and $V$ be nonempty disjoint open subsets of a continuum $X$ and let 
$A$ and $B$ be two disjoint closed subsets of $X$. Then \emph{  
$(X,A,B)$ is crooked in $X$ } between $U$ and $V$ (see \cite{mk}) 
if $A\subset U$, 
$B\subset V$ and there exist three closed subsets $F_0, F_1, F_2$ of $X$ such that \\
(i)~$X=F_0\cup F_1 \cup F_2$,\\
(ii)~$F_0\cap F_2=\phi$,\\
(iii)~$A\subset F_0, B\subset F_2$, and \\
(iv)~ $F_0\cap F_1\subset V, F_1\cap F_2\subset U$.\\  
We will use the following theorem of Krasinkiewicz and Minc.
\begin{thmnonum}[Krasinkiewicz-Minc, \cite{mk}] Let $A, B$ be two nonempty disjoint closed subsets of continuum $X$.
Then, the following are equivalent.
\begin{enumerate}
\item $(X, A, B)$ is crooked between every neighborhood of $A$ and every neighborhood of $B$.
\item Every irreducible continuum between $A$ and $B$ is indecomposable.
\end{enumerate}
\end{thmnonum}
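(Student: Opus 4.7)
The plan is to prove both directions of the equivalence by contrapositive.

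For $(1) \Rightarrow (2)$, I would assume some irreducible continuum $K$ between $A$ and $B$ decomposes as $K = H \cup K'$ into proper subcontinua, and produce open sets witnessing non-crookedness. By irreducibility of $K$, no proper subcontinuum meets both $A$ and $B$, so after relabeling I would have $H \cap B = \emptyset$, $K' \cap A = \emptyset$, $A \cap K \subset H$, and $B \cap K \subset K'$. Using the metric on $X$, I would select disjoint open sets $U \supset A$, $V \supset B$ with $\overline V \cap H = \emptyset$ and $\overline U \cap K' = \emptyset$. Supposing for contradiction that closed sets $F_0, F_1, F_2$ witness crookedness between $U$ and $V$, I would set $H_i = H \cap F_i$ and observe that $H_0 \cap H_1 \subset H \cap V = \emptyset$ (from $F_0 \cap F_1 \subset V$) and $H_0 \cap H_2 \subset F_0 \cap F_2 = \emptyset$; thus $H = H_0 \sqcup (H_1 \cup H_2)$ is a disjoint union of two closed sets. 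Since $A \cap K \subset H_0$ is nonempty, connectedness of $H$ forces $H \subset F_0$, and a symmetric argument gives $K' \subset F_2$. But then $H \cap K' \subset F_0 \cap F_2 = \emptyset$ contradicts the connectedness of $K = H \cup K'$.

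For $(2) \Rightarrow (1)$, assuming $(X, A, B)$ is not crooked between some disjoint open $U \supset A$, $V \supset B$, the task is to produce a decomposable irreducible continuum between $A$ and $B$. I would define $F_0^\star$ as the union of all subcontinua of $X \setminus V$ that meet $A$, and $F_2^\star$ symmetrically; a hyperspace compactness argument shows both are closed, and the containments $A \subset F_0^\star \subset X \setminus V$, $B \subset F_2^\star \subset X \setminus U$ are immediate. If $F_0^\star \cap F_2^\star = \emptyset$, then by inflating $F_0^\star$ and $F_2^\star$ slightly using closed neighborhoods contained in $U$ and $V$ respectively and taking $F_1$ as the closure of the complement, one would obtain a crooked decomposition of $X$ between $U$ and $V$, contradicting the hypothesis. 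Hence $F_0^\star \cap F_2^\star \neq \emptyset$; picking $x$ in the intersection yields subcontinua $C_0 \subset X \setminus V$ meeting $A$ and $C_2 \subset X \setminus U$ meeting $B$ with $x \in C_0 \cap C_2$, so $C_0 \cup C_2$ is a continuum meeting both $A$ and $B$. I would then choose an irreducible subcontinuum $K \subset C_0 \cup C_2$ between $A$ and $B$; the components of $K \cap C_0$ and $K \cap C_2$ through $A \cap K$ and $B \cap K$ are proper subcontinua of $K$ (each missing one of $A, B$), and irreducibility of $K$ forces their union to be $K$, yielding the desired decomposition.

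The hard part will be the second direction. The construction of $F_1$ in the $F_0^\star \cap F_2^\star = \emptyset$ case is delicate: one must inflate the starred sets so that the complementary closed set touches $F_0$ only inside $V$ and $F_2$ only inside $U$, which requires careful use of normality together with the open sets $U, V$. The verification that the two component subcontinua cover $K$ is also nontrivial, since $K \cap C_0$ and $K \cap C_2$ need not be connected and their $A$- and $B$-components could conceivably be disjoint; handling this case appears to require a separate boundary-bumping argument inside $K$ coupled with irreducibility, and is precisely the delicate point Krasinkiewicz and Minc treat in \cite{mk}.
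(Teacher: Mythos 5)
The paper does not prove this statement at all --- it is imported verbatim from Krasinkiewicz--Minc \cite{mk} and used as a black box in the proof of Theorem~4.2 --- so your proposal has to stand entirely on its own. Your argument for $(1)\Rightarrow(2)$ does: choosing disjoint neighborhoods with $\overline{V}\cap H=\emptyset$ and $\overline{U}\cap K'=\emptyset$, and then splitting $H=H_0\sqcup(H_1\cup H_2)$ by exploiting the crossed condition (iv) ($F_0\cap F_1\subset V$, $F_1\cap F_2\subset U$), is correct and complete; the contradiction $H\subset F_0$, $K'\subset F_2$, $F_0\cap F_2=\emptyset$ against connectedness of $K=H\cup K'$ goes through exactly as you wrote it.

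The direction $(2)\Rightarrow(1)$, however, has a genuine gap at each of its two hinges. In the case $F_0^\star\cap F_2^\star=\emptyset$, metric inflation plus normality cannot deliver condition (iv): the boundary of a small closed neighborhood of $F_0^\star$ runs alongside the whole of $F_0^\star$, which stretches from $A$ across the space and is nowhere near $V$ (your pairing is also backwards --- (iv) forces the fringe of $F_0$ into $V$ and the fringe of $F_2$ into $U$, not the other way around). The tool you are missing is the coincidence of components and quasi-components in compact spaces: since $F_0^\star$ is exactly the union of the components of the compact set $X\setminus V$ that meet $A$, a finite union of clopen-in-$(X\setminus V)$ sets produces $E$ with $F_0^\star\subset E$, $E$ disjoint from the other side, and --- crucially --- $\partial_X E\subset\overline{V}$, because $E$ is simultaneously closed in $X$ and relatively open in $X\setminus V$. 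Even then the fringe only lands in $\overline{V}\setminus V$, so one must first shrink to $V_0$ with $\overline{V_0}\subset V$ and run the construction there; and since the starred sets \emph{grow} when the neighborhoods shrink, disjointness of the stars for $(U,V)$ does not pass to $(U_0,V_0)$, so the dichotomy must be quantified over shrunken pairs from the outset. Second, and more seriously, the final step of the other case --- that the irreducible continuum $K\subset C_0\cup C_2$ is decomposable --- is precisely what you do not prove. If $K\cap C_0$ and $K\cap C_2$ were connected you would be done, since they are closed, cover $K$, and are proper (each misses $B$ resp.\ $A$, which $K$ meets); but they need not be connected, your proposed $A$- and $B$-components need not cover $K$, and deferring exactly this point to \cite{mk} is circular, since \cite{mk} is the source of the theorem being proved. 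As it stands, your proposal establishes one implication and the correct skeleton of the converse, but not the theorem.
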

\begin{thm}\label{zpairind} Let $X$ be a $G$-like continuum for a tree $G$,  and $f:X \rightarrow X$ a homeomorphism with a Z-pair $(a,b)$. Then, every
irreducible continuum between $a$ and $b$ is indecomposable. In particular, $X$ contains an indecomposable continuum containing $a$ and $b$.
\end{thm}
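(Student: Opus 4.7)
The plan is to apply the Krasinkiewicz--Minc theorem with $A=\{a\}$ and $B=\{b\}$: it suffices to show that for every pair of disjoint open neighborhoods $U \ni a$ and $V \ni b$, the triple $(X,\{a\},\{b\})$ is crooked between $U$ and $V$.

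Fix such $U$ and $V$. I would first shrink them to $U' \subset U$, $V' \subset V$ with $\overline{U'} \subset U$ and $\overline{V'} \subset V$, and pick $\varepsilon > 0$ small enough that every set of diameter less than $\varepsilon$ meeting $V'$ lies entirely in $V$ (and similarly for $U'$, $U$), and additionally $\varepsilon < d(\overline{U}, \overline{V})/2$. Invoking the Z-pair property with $U'$, $V'$, $\varepsilon$, and $l = 3$ yields an $\varepsilon$-cover $\mathcal{G}$ of $X$ with nerve $G$ and a free chain $C_1, \ldots, C_n \subseteq \mathcal{G}$ satisfying $a \in C_1$, $b \in C_n$, and indices $1 = k_1 < k_2 < k_3 < k_4 = n$ with $C_{k_2} \cap V' \neq \emptyset$ and $C_{k_3} \cap U' \neq \emptyset$. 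By the choice of $\varepsilon$, $\overline{C_{k_2}} \subset V$, $\overline{C_{k_3}} \subset U$, and consequently $k_3 - k_2 \geq 2$.

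Next I would exploit that $G$ is a tree. Freeness forces each interior $C_i$ to meet only $\overline{C_{i-1}}$ and $\overline{C_{i+1}}$ among closures in $\mathcal{G}$, so the interior chain vertices have degree $2$ in $G$; removing them splits the tree into two components with vertex sets $V_L \ni C_1$ and $V_R \ni C_n$. Set $X_L = \bigcup_{C \in V_L} \overline{C}$ and $X_R = \bigcup_{C \in V_R} \overline{C}$, so that $X = X_L \cup \bigcup_{i=1}^n \overline{C_i} \cup X_R$, and define
\[
F_0 = X_L \cup \overline{C_1 \cup \cdots \cup C_{k_2}}, \quad F_1 = \overline{C_{k_2} \cup \cdots \cup C_{k_3}}, \quad F_2 = \overline{C_{k_3} \cup \cdots \cup C_n} \cup X_R.
\]
The verification of the four crookedness conditions is then mostly bookkeeping: $F_0 \cup F_1 \cup F_2 = X$ and $a \in F_0$, $b \in F_2$ are immediate; $F_0 \cap F_1$ reduces to $\overline{C_{k_2}} \subset V$ and $F_1 \cap F_2$ to $\overline{C_{k_3}} \subset U$, using freeness (which keeps $X_L$ and $X_R$ away from interior chain elements) together with the chain property $\overline{C_i} \cap \overline{C_j} = \emptyset$ whenever $|i - j| \geq 2$; and $F_0 \cap F_2 = \emptyset$ follows by combining these facts with $X_L \cap X_R = \emptyset$. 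The Krasinkiewicz--Minc theorem then delivers that every irreducible subcontinuum between $a$ and $b$ (existence by Zorn's lemma) is indecomposable, proving the theorem.

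The main obstacle is verifying $X_L \cap X_R = \emptyset$: by definition, members of $V_L$ and $V_R$ lie in distinct components of the nerve tree $G$ (after removing the interior chain), so they are non-adjacent, but the nerve only records open-set intersections and nothing in the Z-pair definition forbids closures of non-adjacent cover elements from meeting. I would handle this by refining $\mathcal{G}$ at the outset to a cover with the same nerve in which closure-intersections coincide with nerve-adjacencies (a ``taut'' cover, obtained by shrinking each member slightly using a Lebesgue number), so that the tree-component argument in $G$ faithfully translates into disjointness in $X$.
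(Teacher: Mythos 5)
Your proof is correct and follows essentially the same route as the paper: the Krasinkiewicz--Minc theorem applied with $A=\{a\}$, $B=\{b\}$, a $3$-zigzag free chain supplied by the Z-pair with slightly shrunk neighborhoods, and the identical three closed sets $F_0, F_1, F_2$ (your $X_L$, $X_R$ are the paper's $\cup\,{\mathcal G}_a$, $\cup\,{\mathcal G}_b$). The one point where you diverge --- patching $X_L \cap X_R = \emptyset$ via a taut cover --- is not actually needed, since the paper's chain and free-chain definitions are already phrased in terms of closures, and any point of $X_L \cap X_R$ would lie in some open member $k$ of the cover with $k$ meeting both an element of ${\mathcal G}_a$ and an element of ${\mathcal G}_b$, which the closure-freeness of the chain and the absence of cycles in the tree nerve $G$ rule out; your Lebesgue-number refinement is a harmless (if slightly delicate, since shrinking can delete nerve edges) alternative to this observation.
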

\begin{proof} In light of the Theorem of Krasinkiewicz-Minc, it suffices to show that $(X,a,b)$ is crooked between
an $\eps$ ball $U$ centered at $a$  and an $\eps$ ball $V$ centered at $b$. We may assume that $\eps$
small enough so that $d(a,b) < \frac{\eps}{8}$.  Let $U'$ and $V'$ be balls of radius 
$\eps/ 4$ centered at $a,b$, respectively. 
By the definition of Z-pair, there is an open $\eps/ 4$-cover ${\mathcal G}$ of $X$ whose nerve is $G$, a free chain ${\mathcal C} = \{C_1, \ldots, C_n\}$ in ${\mathcal G}$ which is a 3-zigzag between $U'$ and $V'$ with $a \in C_1, b \in C_n$. Let $1 = k_1 < k_2 \ldots <k_4=n$ be the integers which witness 3-zigzag. As the nerve of 
${\mathcal G}$ is a tree and ${\mathcal C}$ is a free arc of ${\mathcal G}$, we have that ${\mathcal G} \setminus {\mathcal C}$ is the union of two disjoint collections whose nerves are trees, one whose union intersects only $C_1$ and the other whose union intersects only $C_n$. Let us call these collection them ${\mathcal G}_a, {\mathcal G}_b$, respectively. Moreover, as the nerve of  ${\mathcal G}$ is a tree, we have that $\overline{(\cup {\mathcal G}_a) } \cap \overline{ (\cup {\mathcal G}_b)} = \emptyset$. We 
now define the desired closed sets $F_0, F_1, F_2$.

\[ F_0 = \overline{ \cup{\mathcal G}_a \cup   \left ( \cup \{C_1 ,\ldots, C_{k_2}\} \right ) }
\]
\[ F_1 = \overline{  \cup \{C_ {k_2},\ldots, C_{k_3}\} }
\]
\[ F_2 = \overline{ \cup{\mathcal G}_b \cup   \left ( \cup \{C_{k_3} ,\ldots, C_n\} \right ) }
\]
It is clear that $F_0, F_1, F_2$ satisfy all the conditions of crookedness between $U,V$ , except possibly (iv). 
Let us show that $F_0 \cap F_1 \subseteq V$. Note that $F_0 \cap F_1 \subseteq \overline {C}_{k_2}$. 
As the diameter of $C_{k_2}$ is less  than $\eps/ 4$,  and it intersects $V'$, a ball centered at $b$ with
diameter $\eps/4$, we have that $\overline{C_{k_2}}$ is a subset of ball centered at $b$ with diameter
less than $\eps$. Hence, $F_0 \cap F_1 \subseteq V$. An analogous argument shows that $F_1 \cap F_2 \subseteq U$,
completing the proof.
\end{proof}
\begin{cor}\label{hominid} Let $X$ be  a $G$-like continuum for some tree $G$ and $h:X \rightarrow X$ a homeomorphism with positive entropy. Then $X$ contains an indecomposable continuum.
\end{cor}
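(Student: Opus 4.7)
The plan is a direct synthesis of the ingredients already assembled, with no new work required. First, since $h$ has positive topological entropy, Fact~\ref{entfacts}(2) yields an IE-pair $(x_1,x_2)$ with $x_1 \neq x_2$. Second, Corollary~\ref{zpairdense} asserts that Z-pairs approximate such a pair arbitrarily well; choosing a Z-pair $(a,b)$ within distance $\tfrac{1}{2}d(x_1,x_2)$ of $(x_1,x_2)$ in $X \times X$ guarantees $a \neq b$. Third, since $G$ is a tree, Theorem~\ref{zpairind} applied to the Z-pair $(a,b)$ exhibits an indecomposable subcontinuum of $X$ containing both $a$ and $b$; this continuum is nondegenerate precisely because $a \neq b$. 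The irreducible continuum between $a$ and $b$ used inside that theorem exists by the standard Zorn's lemma argument on the family of subcontinua of $X$ containing $\{a,b\}$, ordered by reverse inclusion, using that nested intersections of subcontinua in a compact Hausdorff space are subcontinua.

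The only point requiring a moment of care is the nondegeneracy of the Z-pair, which is why Corollary~\ref{zpairdense} was stated as an approximation result rather than a bare existence statement. Beyond this there is no obstacle: all of the genuine difficulty sits upstream, in Lemmas~\ref{zigzag} and~\ref{zpair} (producing a zigzag free chain from an IE-pair via the combinatorial Proposition~\ref{combo} and the density-enlargement Proposition~\ref{enlargeindsets}) and in Theorem~\ref{zpairind} itself (verifying the Krasinkiewicz--Minc crookedness criterion by splitting a $3$-zigzag chain at its turning points $k_2$ and $k_3$, with the tree structure of $G$ ensuring that the exterior collections ${\mathcal G}_a$ and ${\mathcal G}_b$ have disjoint closures and can be absorbed into $F_0$ and $F_2$ respectively).
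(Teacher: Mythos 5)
Your proposal is correct and follows exactly the paper's own route: Fact~\ref{entfacts}(2) produces a nondiagonal IE-pair, Corollary~\ref{zpairdense} yields a Z-pair near it, and Theorem~\ref{zpairind} then gives the indecomposable continuum. Your extra care about nondegeneracy of the Z-pair and the Zorn's lemma existence of an irreducible continuum are sound refinements the paper leaves implicit (indeed Lemma~\ref{zpair} already produces the Z-pair with coordinates in disjoint closed sets, so distinctness is automatic), but they do not change the argument.
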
 
\begin{proof} Indeed, by Fact~\ref{entfacts} (2) we have that $X$ has an IE-par, by Theorem~\ref{zpairdense} we have that $X$ has a Z-pair and by Theorem~\ref{zpairind} we have that $X$ contains an indecomposable continuum.
\end{proof}
\begin{thm}\label{treecont}Let $X$ be a $G$-like continuum for some tree $G$ and $h:X \rightarrow X$ be a map with positive entropy. Then, $\varprojlim (X,f)$ contains an indecomposable continuum. 
\end{thm}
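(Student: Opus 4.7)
The plan is to reduce the case of a continuous surjection $f:X\to X$ to the homeomorphism case already handled by Corollary~\ref{hominid}, by passing to the shift on the inverse limit. Concretely, set $\widehat{X}=\varprojlim(X,f)$ and let $\sigma:\widehat{X}\to\widehat{X}$ be the shift map introduced in Fact~\ref{bowen}. Three ingredients then combine with essentially no further work.

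First, by Fact~\ref{bowen}, $\sigma$ is a homeomorphism of $\widehat{X}$ whose topological entropy equals $h_{top}(f)$; in particular, $h_{top}(\sigma)>0$. Second, by Fact~\ref{contfacts}(\ref{5}), since $X$ is $G$-like for the tree $G$, the inverse limit $\widehat{X}$ is also $G$-like for the same tree $G$. Third, we are now in precisely the hypothesis of Corollary~\ref{hominid}: a homeomorphism of positive entropy on a continuum that is $T$-like for some tree $T$. Applying that corollary to $(\widehat{X},\sigma)$ yields an indecomposable subcontinuum of $\widehat{X}=\varprojlim(X,f)$, which is exactly the conclusion of the theorem.

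There is no real obstacle here: the entire content of the theorem is the reduction $(X,f)\rightsquigarrow(\widehat{X},\sigma)$, which converts a map with positive entropy on a tree-like continuum into a homeomorphism with positive entropy on a tree-like continuum of the same underlying shape $G$. The substantive work has already been done in the development of Z-pairs (Corollary~\ref{zpairdense}), in the tree-like crookedness argument (Theorem~\ref{zpairind}), and in their combination in Corollary~\ref{hominid}. Note also that one does not even need to invoke Fact~\ref{contfacts}(\ref{2}) or (\ref{4}) directly, because those were already folded into the tree-like homeomorphism case; all that is required at this stage is the $G$-likeness of $\widehat{X}$ and the homeomorphism property of $\sigma$.
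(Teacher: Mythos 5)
Your proposal is correct and follows exactly the paper's own argument: pass to the shift homeomorphism $\sigma$ on $\varprojlim(X,f)$ via Fact~\ref{bowen}, note that $\varprojlim(X,f)$ is $G$-like by Fact~\ref{contfacts}(\ref{5}), and apply Corollary~\ref{hominid}. The paper's proof of Theorem~\ref{treecont} is this same three-step reduction, so there is nothing to add.
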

\begin{proof} Consider $\varprojlim (X,f)$ and $\sigma$ the shift map on $\varprojlim (X,f)$. 
By Fact \ref{contfacts}(\ref{5})  we have that $\varprojlim (X,f)$ is $G$-like and by Fact (\ref{bowen})  $\sigma$ is a homeomorphism
of  $\varprojlim (X,f)$ with positive entropy. Hence, by 
Corollary~\ref{hominid} we have that   $\varprojlim (X,f)$ contains an indecomposable continuum.
\end{proof}
Using the following lemma, we can conclude, in the case that $f:X \rightarrow X$ is a monotone 
map with positive entropy, the existence of indecomposable subcontinuum persists.
The corollary following the lemma below answers questions of Mouron \cite{mouron}.
\begin{lem}\label{lem:repeat}Let $X$ be a continuum and $f$ a continuous, monotone map of $X$.
If $X$ is hereditarily unicoherent and $\varprojlim (X,f)$ contains an indecomposable continuum,
then so does $X$.
\end{lem}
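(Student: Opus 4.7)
The plan is to produce an indecomposable subcontinuum of $X$ as the image of a nondegenerate indecomposable subcontinuum of $Y := \varprojlim(X,f)$ under one of the coordinate projections $\pi_n : Y \to X$, $\pi_n(x_1,x_2,\ldots) = x_n$. So let $K \subseteq Y$ be a nondegenerate indecomposable continuum, which exists by hypothesis, and set $Y = \varprojlim(X,f)$.

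First I would assemble the relevant hereditary/monotone properties. Since $f$ is monotone, Fact~\ref{contfacts}(\ref{7}) gives that each $\pi_n : Y \to X$ is monotone. Since $X$ is hereditarily unicoherent, Fact~\ref{contfacts}(\ref{4}) gives that $Y$ is hereditarily unicoherent.

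The key observation is that, for every $n$, the restriction $\pi_n|_K : K \to \pi_n(K)$ is again monotone. Indeed, for $y \in \pi_n(K)$ we have
\[
(\pi_n|_K)^{-1}(y) \;=\; \pi_n^{-1}(y)\cap K,
\]
which is the intersection of two subcontinua of $Y$ (using that $\pi_n$ is monotone so $\pi_n^{-1}(y)$ is a continuum, and $K$ is a continuum) whose intersection is nonempty since $y\in \pi_n(K)$. By hereditary unicoherence of $Y$, this intersection is a continuum, hence connected. Thus $\pi_n|_K$ has connected point-preimages, so it is monotone onto $\pi_n(K)$.

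To finish, I would note that because $K$ is nondegenerate, there exist distinct points $(x_i),(y_i)\in K$, and these must differ in some coordinate $n$, so $\pi_n(K)$ is a nondegenerate subcontinuum of $X$. Applying Fact~\ref{contfacts}(\ref{6.5}) to the monotone surjection $\pi_n|_K : K \to \pi_n(K)$ with $K$ indecomposable and image of more than one point yields that $\pi_n(K)$ is an indecomposable subcontinuum of $X$.

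The only nontrivial step is the verification that $\pi_n|_K$ is monotone, but this is exactly where hereditary unicoherence is being used in an essential way; everything else is a direct application of the facts already collected in Fact~\ref{contfacts}.
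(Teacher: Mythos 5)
Your proposal is correct and follows essentially the same route as the paper: project an indecomposable subcontinuum of $\varprojlim(X,f)$ under $\pi_n$ for $n$ large enough that the image is nondegenerate, using Fact~\ref{contfacts}(4) and (7) and concluding via Fact~\ref{contfacts}(6). The only difference is cosmetic: where the paper cites Fact~\ref{contfacts}(5) to get monotonicity of $\pi_n|_K$, you reprove that fact inline (correctly) via hereditary unicoherence of the inverse limit.
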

\begin{proof}
By Fact~\ref{contfacts}(\ref{4}) we have that $\varprojlim (X,f)$ is hereditarily unicoherent and 
by Fact~\ref{contfacts}(\ref{7}) $\pi_n$, the $n^{th}$ projection,
is a monotone map for all $n$. Let $M$ be an indecomposable subcontinuum of $\varprojlim (X,f)$.
Choose $n$ large enough so that $\pi_n(M)$ has more than one point. By Fact~\ref{contfacts}(\ref{1}),
$\pi_n|M$ is also monotone. By By Fact~\ref{contfacts}(\ref{6.5}), we have that $\pi_n(M)$ is 
indecomposable, completing the proof.
\end{proof}
\begin{cor}\label{cor:repeat1} Let $G$ be a tree and $X$ a $G$-like continuum. If $f: X \rightarrow X$ is a
monotone map of $X$ with positive entropy, then $X$ contains an indecomposable continuum. 
\end{cor}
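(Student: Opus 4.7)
The plan is to combine Theorem \ref{treecont} and Lemma \ref{lem:repeat} essentially mechanically, using the tree-like hypothesis to supply hereditary unicoherence. Since $G$ is a tree and $X$ is $G$-like, Fact \ref{contfacts}(\ref{2}) immediately gives that $X$ is hereditarily unicoherent. This is the only continuum-theoretic input we need to activate Lemma \ref{lem:repeat}.

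Next, I would apply Theorem \ref{treecont} to $f$: since $X$ is $G$-like for the tree $G$ and $f$ has positive topological entropy, the theorem guarantees that $\varprojlim(X,f)$ contains an indecomposable subcontinuum $M$. Note that $f$ being monotone is not used at this step; positive entropy alone suffices to produce the indecomposable subcontinuum in the inverse limit.

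Finally, I would push the indecomposability down to $X$ via Lemma \ref{lem:repeat}. The hypotheses of that lemma are now all in place: $X$ is hereditarily unicoherent (from the first step) and $f$ is monotone (from the hypothesis of the corollary), and $\varprojlim(X,f)$ contains the indecomposable continuum $M$ produced in the second step. Lemma \ref{lem:repeat} then yields an indecomposable continuum inside $X$ itself, as desired.

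There is essentially no obstacle here since every ingredient has been prepared: the only verification needed is that the hypotheses of Lemma \ref{lem:repeat} match the setting, which they do verbatim. The interesting content has been absorbed into Theorem \ref{treecont} (via the Z-pair machinery of Section 3 and the tree-like crookedness argument of Section 4) and into Lemma \ref{lem:repeat} (via the monotonicity of projections and the fact that monotone images of indecomposable continua remain indecomposable when nondegenerate). The corollary is thus a short bookkeeping step, and I would not expect to write more than a couple of lines.
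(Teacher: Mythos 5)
Your proposal is correct and is exactly the paper's proof: apply Theorem~\ref{treecont} to obtain an indecomposable continuum in $\varprojlim(X,f)$, invoke Fact~\ref{contfacts}(\ref{2}) to get hereditary unicoherence of $X$ from the tree-like hypothesis, and then use Lemma~\ref{lem:repeat} with the monotonicity of $f$ to pull the indecomposable continuum down to $X$. Your observation that monotonicity is needed only in the final step, not for producing the indecomposable continuum in the inverse limit, matches the structure of the paper's argument.
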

\begin{proof} By Theorem~\ref{treecont}, we have that $\varprojlim (X,f)$ contains an indecomposable
continuum. As $X$ is $G$-like for a tree $G$, we have, by Fact~\ref{contfacts}(\ref{2}), that $X$ is hereditarily unicoherent.   Now by Lemma~\ref{lem:repeat} we have that $X$ contains an indecomposable continuum.
\end{proof}
\section{General Graph Case}
In this section we consider the general graph case. Before we proceed, we need to introduce some notation.

Let ${\mathcal G}$ and ${\mathcal H}$ be collections of subsets of $X$. By \emph{ ${\mathcal G} [{\mathcal H}]$}
we mean the collection $\{g \in {\mathcal G}: \exists h \in {\mathcal H} \text { with } \overline{h} \subseteq g\}.$

Let $X$ be a continuum and $\{{\mathcal G}_n\}$ be a sequence of covers of $X$. We say
that {\em $\{{\mathcal G}_n\}$ is a defining sequence of $X$ } provided that the following conditions hold:
\begin{itemize}
\item ${\mathcal G}_{n+1}$ is a refinement of ${\mathcal G}_n$, i.e., for each $g \in {\mathcal G}_{n+1}$, there is $g' \in {\mathcal G}_n$ such that $\overline {g} \subseteq g'$, and 
\item $\lim_{n \rightarrow } mesh ({\mathcal G}_n) = 0$. 

\end{itemize}
\begin{lem}\label{indcriteria} Let $X$ be a continuum and $\{{\mathcal G}_n\}$ be a defining sequence of $X$. Furthermore, assume that for each $n$, there exists a free chain ${\mathcal C}_n \subseteq {\mathcal G}_n$  and disjoint subchains  of ${\mathcal D}_n $,  and ${\mathcal E}_n$ of ${\mathcal C}_n$ and 
such
that ${\mathcal C}_n[{\mathcal D}_{n+1}] ={\mathcal C}_n[{\mathcal E}_{n+1}] ={\mathcal C}_n$.
Then, $X$ contains an indecomposable continuum.
\end{lem}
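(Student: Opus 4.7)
The plan is to extract a nondegenerate arc-like subcontinuum $M \subseteq X$ as a Hausdorff limit of $M_n := \overline{\bigcup \mathcal{C}_n}$, and argue that the zigzag refinement structure forces $M$ to contain an indecomposable subcontinuum via the Krasinkiewicz-Minc criterion.

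First I would set $A_n := \overline{\bigcup \mathcal{D}_n}$ and $B_n := \overline{\bigcup \mathcal{E}_n}$; each of $M_n, A_n, B_n$ is a subcontinuum of $X$, since a chain of open sets has connected union. The hypothesis $\mathcal{C}_n[\mathcal{D}_{n+1}] = \mathcal{C}_n$ gives that $A_{n+1}$ meets every link of $\mathcal{C}_n$, hence $A_{n+1}$ is $\text{mesh}(\mathcal{C}_n)$-dense in $M_n$; the analogous statement holds for $B_{n+1}$. This density prevents $\text{diam}(M_n)$ from collapsing, so by compactness of $2^X$ and a diagonal extraction we may pass to a subsequence along which $M_n \to M$, $A_n \to A$, $B_n \to B$ in Hausdorff distance, with $A = B = M$ nondegenerate. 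Since a slight thickening of $\mathcal{C}_n$ eventually produces a chain cover of $M$ of vanishing mesh, $M$ is arc-like.

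Next I would apply the theorem of Krasinkiewicz-Minc. Choose distinct points $p, q \in M$ lying near the extremal links of the chains, and for $n$ large with $\text{mesh}(\mathcal{C}_n) \ll \eps$, let $g_p, g_q \in \mathcal{C}_n$ be the links containing $p, q$. Each of $\mathcal{D}_{n+1}, \mathcal{E}_{n+1}$ is a subchain that visits every link of $\mathcal{C}_n$, and in particular has a link with closure in $g_p$ and one with closure in $g_q$. Since the two subchains are disjoint and linearly ordered in $\mathcal{C}_{n+1}$, a case analysis of their relative orientations shows that the ordered sequence of visits to $g_p$ and $g_q$ along $\mathcal{C}_{n+1}$ yields either a 3-zigzag $g_p, g_q, g_p, g_q$---which directly produces the closed sets $F_0, F_1, F_2$ of Krasinkiewicz-Minc by splitting $\mathcal{C}_{n+1}$ at the two fold positions and intersecting with $M$---or a 2-zigzag which must be refined one more level via $\mathcal{D}_{n+2}, \mathcal{E}_{n+2}$ to produce the desired 3-zigzag. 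The small-mesh control in either case places the boundary intersections $F_0 \cap F_1$ and $F_1 \cap F_2$ inside $\eps$-neighborhoods of $q$ and $p$, respectively, giving the crookedness condition and, via Krasinkiewicz-Minc, an indecomposable irreducible subcontinuum of $M$.

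The main obstacle is the case analysis of orientations and the careful transfer of the chain-level partition from $M_{n+1}$ to the limit continuum $M$ via intersection; one also has to verify that the chosen $p, q$ are achievable as limits of suitably extremal points in the chains, and that the resulting witnesses $F_0, F_1, F_2$ actually exhibit crookedness of an irreducible subcontinuum of $M$. The spirit is that of the 3-zigzag argument in the proof of Theorem~\ref{zpairind}, transplanted to the graph-like setting where the two disjoint refining subchains play the role of the single 3-zigzag chain.
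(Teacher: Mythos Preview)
Your route via Krasinkiewicz--Minc is genuinely different from the paper's, and it has a real gap. The difficulty is exactly the ``transfer'' step you flag at the end: the chains $\mathcal{C}_{n+1}$ do \emph{not} in general cover the limit continuum $M$. The hypothesis $\mathcal{C}_n[\mathcal{D}_{n+1}]=\mathcal{C}_n[\mathcal{E}_{n+1}]=\mathcal{C}_n$ only forces the subchains $\mathcal{D}_{n+1},\mathcal{E}_{n+1}$ to lie inside $\cup\mathcal{C}_n$; the rest of $\mathcal{C}_{n+1}$ may wander outside, so the sets $M_n=\overline{\cup\mathcal{C}_n}$ need not be nested and $M$ need not sit inside any $M_{n+1}$. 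Consequently, splitting $\mathcal{C}_{n+1}$ at the fold positions and intersecting with $M$ will not produce closed sets $F_0,F_1,F_2$ with $F_0\cup F_1\cup F_2=M$, and the crookedness condition cannot be read off. For the same reason your claim that a thickened $\mathcal{C}_n$ gives a chain cover of $M$ (and hence that $M$ is arc-like) is unjustified. Finally, the assertion that the density of $A_{n+1}$ in $M_n$ ``prevents $\operatorname{diam}(M_n)$ from collapsing'' does not follow: $A_{n+1}$ being $\operatorname{mesh}(\mathcal{C}_n)$-dense in $M_n$ is compatible with $\operatorname{diam}(M_n)\to 0$.

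The paper avoids all of this by proving directly that $M$ itself is indecomposable, using the characterization ``every proper subcontinuum is nowhere dense.'' The only fact needed about the relation between $M$ and the chains is the easy direction: every link $C\in\mathcal{C}_n$ meets $M$ (since it contains the closure of some link at every later stage). Given a proper subcontinuum $H\subsetneq M$ and a point $p\in H$, one goes \emph{two} levels down to obtain four pairwise disjoint subchains of $\mathcal{C}_{n+2}$, each filling $\mathcal{C}_n$ (namely $\mathcal{D}_{n+2},\mathcal{E}_{n+2}$ inside each of $\mathcal{D}_{n+1},\mathcal{E}_{n+1}$). Because $H$ is connected and $\mathcal{C}_{n+2}$ is a free chain, the links meeting $H$ form a subchain that can enter and exit the four blocks at most a bounded number of times; together with the existence of a link $U\in\mathcal{C}_n$ missing $H$, this forces one of the four subchains to miss $H$ entirely, producing a point of $M\setminus H$ arbitrarily close to $p$. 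This argument uses neither arc-likeness of $M$ nor any covering of $M$ by the chains, and yields the stronger conclusion that $M$ is indecomposable rather than merely containing an indecomposable subcontinuum.
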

\begin{proof} By passing through a subsequence if necessary, we may assume that $\{\overline{\cup {\mathcal C}_n}\}_{n=1}^{\infty}$ converges in Hausdorff metric to a compact set $M \subseteq X$.
As each ${\mathcal C}_n$ is a chain and $\lim_{n \rightarrow \infty} mesh ({\mathcal G}_n) = 0$, we have that $M$ is a continuum. We next show that $M$ is indecomposable. Let $H$ be a proper subcontinuum of $M$. It will suffice to show that $H$ is nowhere dense in $M$. To this end, let $p \in H$ and $\eps >0$. We will show there is $q \in M \setminus H$ such that the distance between $p$ and $q$ is less than
$\eps$.  

We first observe that if $C \in {\mathcal C}_n$ for some $n$, then ${C} \cap M \neq \emptyset$. Indeed, this is the case as each $C \in {\mathcal C}_n$ has the property that there is $C' \in {\mathcal C}_{n+1}$ such that $\overline {C'} \subseteq C$. 

Choose $n$ sufficiently large so that the following conditions hold.
\begin{itemize}
\item $d_H(M, \overline{\cup {\mathcal C}_n}) < \frac{\eps}{4}$,
\item there exists $U \in {\mathcal C}_n$ such that $U \cap H = \emptyset$, and
\item $mesh ({\mathcal C}_n) < \frac{\eps}{4}$.
\end{itemize}
The first condition above is possible as $M$ is the Hausdorff limit of $\{\overline{\cup {\mathcal C}_n}\}_{n=1}^{\infty}$. The second condition is possible as $M \setminus H \neq \emptyset$.
Let $C \in {\mathcal C}_n$ so that $d_H(p, C) < \frac{\eps}{2}$.

We next observe that the free chain ${\mathcal C}_{n+2}$ has four pairwise disjoint subchains
${\mathcal A}_i$, $ 1 \le i \le 4$, such that ${\mathcal C}_n[{\mathcal A}_{i}] = {\mathcal C}_n$.
Let ${\mathcal H}$
be those elements of ${\mathcal G}_{n+2}$ which intersects $H$. As $H$ is continuum and 
${\mathcal C}_{n+2}$ is a free chain of ${\mathcal G}_{n+2}$, we have that there is an
$j \in \{1, 2, 3, 4\}$ such that ${\mathcal A}_j \subseteq {\mathcal H}$ or ${\mathcal A}_j
\cap {\mathcal H} = \emptyset$. We next observe that ${\mathcal A}_j \subseteq {\mathcal H}$
cannot happen. For otherwise, as ${\mathcal C}_n[{\mathcal A}_{j}] = {\mathcal C}_n$,
there is $a \in {\mathcal A}_j$ such that $a \subseteq U$. As $a \in {\mathcal H}$, we have
that $a \cap H \neq \emptyset$, $a \subseteq U$ but $U \cap H = \emptyset$, leading to a contradiction.
Hence, we have that ${\mathcal A}_j \cap {\mathcal H} = \emptyset$. Now, let $C' \in {\mathcal A}_j$
be such that $\overline{C'} \subseteq C$. Let $q \in C' \cap M$. Then, $q \in M\setminus H$. Moreover,
as $d_H(p, C) < \frac{\eps}{2}$, we have that $d(p,q) < \eps$, completing the proof of nowhere denseness of $H$.
\end{proof}
\begin{lem}\label{genhelp}Let $G$ be a finite graph, $X$ be $G$-like, $h:X \rightarrow X$ be a homeomorphism and $U, V$ two disjoint nonempty opens subsets of $X$. Furthermore, assume that ${\mathcal G}$ is an open cover of $X$, ${\mathcal G}'= \{g_1, g_2, \ldots, g_n\}$ a free chain of ${\mathcal G}$ and $1 < a_1 < b_1 <a_2 < b_2 \ldots < a_6 < b_6 < n$ are such that
\begin{itemize}
\item $\overline{g}_{a_i} \subseteq U$ for all $1 \le i \le 6$,
\item $\overline{g}_{b_i} \subseteq V$ for all $ 1 \le 6$, and 
\item $ \{ g_{a_i} , g_{b_i}: 1 \le i \le 6\}$ has an independent set of positive density.
\end{itemize}
Then, there exits an open cover ${\mathcal H}$ which is a refinement of ${\mathcal G}$, a free chain  ${\mathcal H}'=\{h_1, h_2, \ldots, h_m\}$ of ${\mathcal H}$ and $1 < c_1 < d_1 <c_2 < d_2 \ldots < c_6 < d_6 < m$  such that
\begin{enumerate}
\item $\overline{h}_{c_i} \subseteq U$ for all $1 \le i \le 6$,
\item $\overline{h}_{d_i} \subseteq V$ for all $ 1 \le 6$, 
\item $ \{ h_{c_i} , h_{d_i}: 1 \le i \le 6\}$ has an independent set of positive density, and 
\item for one of ${\mathcal C} =\{g_{a_2}, \ldots , g_{b_3} \}$ or ${\mathcal C} =\{g_{a_4}, \ldots , g_{b_6} \}$ the following holds.
\[ {\mathcal C} [{\mathcal A}_i] = {\mathcal C}_i,  \ \ \ \forall \ 1 \le i \le 6,\] 
where ${\mathcal A}_i$ is the subchain $\{ h_{c_i}, \ldots, h_{d_i}\}$ of ${\mathcal H}'$. 
\end{enumerate}
\end{lem}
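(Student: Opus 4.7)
The idea is to push the $6$-fold zigzag inside ${\mathcal G}'$ one level down into a refinement, using the dynamical richness supplied by the i.s.p.d.\ hypothesis together with a pigeonhole on the edges of $G$, so that each of the new zigzag blocks is not only a zigzag from $U$ to $V$ but also traces out a full copy of one of the two prescribed middle sections. The argument will parallel that of Lemma~\ref{zigzag}, with the extra covering requirement~(4) woven in.

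First I apply Proposition~\ref{enlargeindsets} to the twelve-element family $\{g_{a_i},g_{b_i}:1\le i\le 6\}$, for a large parameter $n$ to be fixed later, obtaining $F\subseteq\N$ with $|F|=n$ and a collection ${\mathcal A}_F$ of pairwise disjoint nonempty closed sets having an i.s.p.d., of cardinality up to $12^n$ after discarding empty atoms. Using the $G$-likeness of $X$, I then choose a refinement ${\mathcal H}$ of ${\mathcal G}$ with nerve $G$ whose mesh $\delta$ is small enough that (i) every $h\in{\mathcal H}$ has $\overline{h}\subseteq g$ for some $g\in{\mathcal G}$; (ii) each $h$ meets at most one member of ${\mathcal A}_F$; and (iii) by uniform continuity of the iterates $h^s$ for $s\in F$, the $\delta$-images $h^s(h)$ lie cleanly inside $U$ or $V$ as appropriate. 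Fact~\ref{entfacts}(4) then extracts a subfamily ${\mathcal H}^{*}\subseteq{\mathcal H}$ of cardinality $|{\mathcal A}_F|$ whose trace on $\bigcup{\mathcal A}_F$ still has an i.s.p.d., and pigeonholing over the $|G|$ edges of $G$ produces a free chain ${\mathcal H}'\subseteq{\mathcal H}$ refining ${\mathcal G}'$ which meets at least $|{\mathcal A}_F|/|G|$ elements of ${\mathcal H}^{*}$. The combinatorial heart is then to apply a sharpening of Proposition~\ref{combo} to the sequence of $\{0,1,\ldots,11\}^F$-profiles attached to the elements of ${\mathcal H}'\cap{\mathcal H}^{*}$ in chain order, extracting a single coordinate $s\in F$ and six disjoint subchains ${\mathcal A}_1,\ldots,{\mathcal A}_6$ of ${\mathcal H}'$ whose endpoint profiles alternate in the $s$-coordinate between labels coding some $g_{a_j}$ and some $g_{b_j}$; translating by $h^s$ as in Lemma~\ref{zigzagprep} turns these endpoints into the required $(c_i,d_i)$-zigzag and inherits the i.s.p.d.\ via the homeomorphism.

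The chief obstacle is condition~(4): ensuring that each ${\mathcal A}_i$ contains a refinement element whose closure lies inside every $g\in{\mathcal C}$. This is where the dichotomy between ${\mathcal C}_1=\{g_{a_2},\ldots,g_{b_3}\}$ and ${\mathcal C}_2=\{g_{a_4},\ldots,g_{b_6}\}$ earns its keep: by a counting argument on how the free chain ${\mathcal H}'$ traverses ${\mathcal G}'$, at least one of these two middle sections must be systematically traversed enough times to support six disjoint full covers, and this is the section that is selected. Synchronizing this covering condition with the single coordinate $s\in F$ that simultaneously produces the i.s.p.d., while ensuring that the six subchains respect both the zigzag and the covering, is the delicate step that absorbs most of the technical effort, and is what dictates how large $n$ (and hence $|{\mathcal A}_F|$) must be chosen at the beginning.
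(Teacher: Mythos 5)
Your proposal does not follow the paper's route, and at the decisive point it has a genuine gap. The paper's proof is short and does not re-run the zigzag machinery at all: from the hypothesis that $\{g_{a_i},g_{b_i}:1\le i\le 6\}$ has an i.s.p.d., Fact~\ref{entfacts}(1) produces an IE-pair with coordinates in $\overline{g}_{a_1}$ and $\overline{g}_{b_4}$, Corollary~\ref{zpairdense} then gives a Z-pair with coordinates in $g_s\cap U$ and $g_t\cap V$ where $|s-a_1|\le 1$ and $|t-b_4|\le 1$, and the definition of Z-pair --- which quantifies over \emph{all} $\eps>0$ and \emph{all} odd $l$ --- immediately supplies a refinement ${\mathcal H}$ of ${\mathcal G}$ and a free chain ${\mathcal H}'$ that is a $23$-zigzag between $g_s\cap U$ and $g_t\cap V$. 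Condition (4) then comes from the topology of the nerve: each of the twelve subchains of ${\mathcal H}'$ joining consecutive anchors travels from $g_s$ to $g_t$ either directly through the free chain ${\mathcal G}'$, in which case it sweeps every link of $\{g_{a_2},\ldots,g_{b_3}\}$, or the other way around the graph, in which case it sweeps the other prescribed section; by pigeonhole at least six of the twelve go the same way, and these are the ${\mathcal A}_i$. Conditions (1)--(3) are inherited from the zigzag anchors.

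The gap in your plan is precisely at condition (4), which you acknowledge as ``the delicate step'' but never carry out, and the sketch as given would fail for two concrete reasons. First, you finish by translating by $h^s$ as in Lemma~\ref{zigzagprep}; but the containments $\overline{h}\subseteq g$ demanded by ${\mathcal C}[{\mathcal A}_i]={\mathcal C}$ are \emph{positional} constraints relative to the specific links $g_{a_2},\ldots,g_{b_3}$ (or $g_{a_4},\ldots,g_{b_6}$) of the original chain, and they are destroyed by the push-forward: uniform continuity of the iterates controls only the \emph{size} of $h^s(h)$, not where it lands, and $h^s({\mathcal H})$ need not even refine ${\mathcal G}$. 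This is exactly the problem the Z-pair formalism was introduced to absorb --- the push-forward happens inside the proof of Lemma~\ref{zigzag}, and what survives into the definition of Z-pair is a zigzag available in covers of arbitrarily small mesh, which can therefore be taken to refine ${\mathcal G}$. Second, your combinatorial step only makes the anchors alternate between ``some $g_{a_j}$'' and ``some $g_{b_j}$''; that is too weak, since a traversal from, say, $g_{a_3}$ to $g_{b_3}$ sweeps only the links strictly between them and covers neither prescribed section in full. To make the two-route dichotomy yield one of the two \emph{fixed} sections, all twenty-four anchors must sit in fixed links near $a_1$ and near $b_4$, which is why the paper anchors the Z-pair at the single pair $(\overline{g}_{a_1},\overline{g}_{b_4})$ rather than distributing anchors over twelve labels. (A minor further debt: the ``sharpening'' of Proposition~\ref{combo} to the alphabet $\{0,1,\ldots,11\}$ is asserted, not proved; it is plausible but also unnecessary, since the two-set version applied to the pair $(\overline{g}_{a_1},\overline{g}_{b_4})$, packaged as Corollary~\ref{zpairdense}, suffices.)
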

\begin{proof} By Fact~\ref{entfacts} (1) we have that there is an IE-pair one of whose 
coordinates belong to $\overline{g}_{a_1}$ and the other one to $\overline{g}_{b_4}$.
By Corollary~\ref{zpairdense}, we have that there is a Z-pair such that one of its coordinates belong to $ g_s \cap U$, $|s-a_1| \le 1$, and the other belongs to $g_t \cap V$, $|t-b_4| \le 1$.  Using the definition of Z-pair we can easily obtain an open cover ${\mathcal H}$ which is a refinement of ${\mathcal G}$, a free chain  ${\mathcal H}'=\{h_1, h_2, \ldots, h_m\}$ of ${\mathcal H}$ which is 23-zigzag between $g_s \cap U$ and $g_t \cap V$. That is, there is $1 < m_1 < m_2 \ldots m_{24} <m$ such that
\begin{itemize}
\item $\overline{h}_{m_i} \subseteq g_s \cap U$, for $i$ odd,
\item $\overline{h}_{m_i} \subseteq g_t  \cap V$, for $i$ even, and
\item $\{h_{m_i}:1 \le i \le 24\}$ has i.s.p.d.
\end{itemize}
 As ${\mathcal G'}$ is a free chain of ${\mathcal G}$ we have that for one of  ${\mathcal C} =\{g_{a_2}, \ldots , g_{b_3} \}$ or ${\mathcal C} =\{g_{a_4}, \ldots , g_{b_6} \}$, there are at least 6 
 odd integers $i$ such that the subchain ${\mathcal A} = \{h_{m_i}, \ldots h_{m_{i+1}}\}$ of ${\mathcal H}'$ has the property that 
 \[ \mathcal{C}[{\mathcal A}]=\mathcal{C},
\]
completing the proof of the lemma.
\end{proof}
\begin{lem}\label{genind} Let $G$ be a finite graph, $X$ be $G$-like and $h:X \rightarrow X$ be a homeomorphism. Suppose that $(x,y)$ is an IE-pair of distinct points and $U, V$ are open sets containing $x,y$, respectively. Then, $X$ contains an indecomposable continuum which intersects $U$ and $V$.
\end{lem}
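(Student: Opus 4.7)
The plan is to iterate Lemma~\ref{genhelp} to construct a defining sequence meeting the hypotheses of Lemma~\ref{indcriteria}, then verify that the resulting indecomposable continuum meets both $U$ and $V$.

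Since $x \ne y$, I would first replace $U, V$ by open neighborhoods $U' \subseteq U$ and $V' \subseteq V$ of $x, y$ with $\overline{U'} \cap \overline{V'} = \emptyset$, and pick closed neighborhoods $A_0 \subseteq U'$ of $x$ and $A_1 \subseteq V'$ of $y$. Since $(x,y)$ is an IE-pair, Fact~\ref{entfacts}(1) gives that $(A_0, A_1)$ has an i.s.p.d., so Lemma~\ref{zigzag} applied with a sufficiently large odd $l$ (say $l=11$, allowing a harmless padding of one chain-link at each end) yields an $\eps_1$-cover $\mathcal G_1$ of $X$ with nerve $G$ and a free chain $\mathcal G'_1 \subseteq \mathcal G_1$ which is a zigzag between $A_0$ and $A_1$; extracting the resulting alternating positions provides initial data $a_1^{(1)} < b_1^{(1)} < \cdots < a_6^{(1)} < b_6^{(1)}$ fulfilling the hypothesis of Lemma~\ref{genhelp} with the disjoint pair $U', V'$ in place of $U, V$.

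Iterating Lemma~\ref{genhelp} with $\eps_n \to 0$ (its proof invokes Corollary~\ref{zpairdense}, which permits arbitrarily small mesh in the output cover) produces, at each step, a refinement $\mathcal G_{n+1}$ of $\mathcal G_n$, a free chain $\mathcal G'_{n+1} = \{g_1^{(n+1)}, \dots\}$ with six alternating pairs at positions $a_i^{(n+1)} < b_i^{(n+1)}$ having an i.s.p.d., a distinguished subchain $\mathcal C_n \subseteq \mathcal G'_n$ equal to either $\{g_{a_2^{(n)}}^{(n)}, \dots, g_{b_3^{(n)}}^{(n)}\}$ or $\{g_{a_4^{(n)}}^{(n)}, \dots, g_{b_6^{(n)}}^{(n)}\}$, and six subchains $\mathcal A_i^{(n)} = \{g_{a_i^{(n+1)}}^{(n+1)}, \dots, g_{b_i^{(n+1)}}^{(n+1)}\}$ of $\mathcal G'_{n+1}$ with $\mathcal C_n[\mathcal A_i^{(n)}] = \mathcal C_n$ for every $i$. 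The key combinatorial observation is that the next step's subchain $\mathcal C_{n+1}$, being one of $\{g_{a_2^{(n+1)}}^{(n+1)}, \dots, g_{b_3^{(n+1)}}^{(n+1)}\}$ or $\{g_{a_4^{(n+1)}}^{(n+1)}, \dots, g_{b_6^{(n+1)}}^{(n+1)}\}$, automatically contains at least two of the $\mathcal A_i^{(n)}$'s, namely $\mathcal A_2^{(n)}, \mathcal A_3^{(n)}$ in the first case and $\mathcal A_4^{(n)}, \mathcal A_5^{(n)}, \mathcal A_6^{(n)}$ in the second. Picking two such $\mathcal A_i^{(n)}$'s as $\mathcal D_{n+1}, \mathcal E_{n+1}$ gives disjoint subchains of $\mathcal C_{n+1}$ satisfying $\mathcal C_n[\mathcal D_{n+1}] = \mathcal C_n[\mathcal E_{n+1}] = \mathcal C_n$, so Lemma~\ref{indcriteria} yields an indecomposable continuum $M$ obtained as a Hausdorff subsequential limit of $\overline{\cup \mathcal C_n}$.

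To see that $M$ meets $U$ and $V$, note that each $\mathcal C_n$ contains an element whose closure lies in $\overline{U'} \subseteq U$ and an element whose closure lies in $\overline{V'} \subseteq V$; passing to subsequential limits of representative points drawn from these compact sets produces points of $M \cap \overline{U'} \subseteq M \cap U$ and $M \cap \overline{V'} \subseteq M \cap V$. The main obstacle I anticipate is precisely the combinatorial bookkeeping just outlined: Lemma~\ref{genhelp} offers no control over which of its two candidate subchains becomes $\mathcal C_n$, so the inductive choice of $\mathcal D_{n+1}, \mathcal E_{n+1}$ must succeed uniformly in the branch selected at each level. The observation that either candidate for $\mathcal C_{n+1}$ spans several consecutive alternating pairs of $\mathcal G'_{n+1}$, hence always absorbs at least two of the six filler subchains from level $n$, is what makes the construction robust.
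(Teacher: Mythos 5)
Your proposal is correct and follows essentially the same route as the paper: the paper's proof is exactly ``Corollary~\ref{zpairdense} plus repeated applications of Lemma~\ref{genhelp}, then choose two disjoint $\mathcal{A}_{n,i}$'s inside $\mathcal{C}_{n+1}$ as $\mathcal{D}_{n+1},\mathcal{E}_{n+1}$ and invoke Lemma~\ref{indcriteria}.'' In fact you supply the bookkeeping the paper compresses into one sentence --- the observation that whichever of the two candidate subchains becomes $\mathcal{C}_{n+1}$, it contains at least two of the six pairwise disjoint subchains $\mathcal{A}_i$ with $\mathcal{C}_n[\mathcal{A}_i]=\mathcal{C}_n$, plus the initialization from the IE-pair via Lemma~\ref{zigzag} and the verification that the limit continuum meets $U$ and $V$ --- all of which is consistent with the paper's intended argument (your $l=11$ with ``padding'' should simply be a slightly larger odd $l$ with end links trimmed, a cosmetic point).
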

\begin{proof}By Corollary~\ref{zpairdense} and repeated applications of Lemma~\ref{genhelp}, we may choose a defining sequence ${\mathcal G}_n$ of $X$, a free chain ${\mathcal G}'_n$ of ${\mathcal G}_n$,  six pairwise disjoint ${\mathcal A}_{n,i}$, $1 \le i \le 6$, subchains of ${\mathcal G}'_n$,  and  ${\mathcal C}_n$ which satisfies the conclusion of Lemma~\ref{genhelp}.
Note that ${\mathcal C}_n$ is the union of  two disjoint ${\mathcal A}_{n,i}$'s. Call one of the ${\mathcal D}_i$ and the other ${\mathcal E}_i$. Then,
$\{({\mathcal C}_i, {\mathcal D}_i, {\mathcal E}_i)\}$ satisfy the hypothesis of Lemma~\ref{indcriteria}. Hence $X$ contains an indecomposable continuum which intersect $U$ and $V$.
\end{proof}
\begin{cor}\label{genhomind}
Let $G$ be a finite graph, $X$ be $G$-like and $h:X \rightarrow X$ be a homeomorphism
with positive entropy.  Then, $X$ contains an indecomposable continuum. 
\end{cor}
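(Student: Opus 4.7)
The plan is to assemble Corollary~\ref{genhomind} directly from the machinery already built up in the section, since all the real work has been absorbed into Lemma~\ref{genind}. The corollary is essentially an immediate consequence once one has (a) a nondegenerate IE-pair and (b) the tool that turns such a pair into an indecomposable subcontinuum.

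First I would invoke Fact~\ref{entfacts}(2), which says that $h_{top}(h) > 0$ is equivalent to the existence of an IE-pair $(x,y) \in X \times X$ with $x \neq y$. Since by hypothesis $h$ has positive entropy, such a pair exists. Next, since $X$ is a metric space and $x \neq y$, I can pick two disjoint open sets $U, V \subseteq X$ with $x \in U$ and $y \in V$ (for instance, open balls of radius strictly less than $d(x,y)/2$). At this point all the hypotheses of Lemma~\ref{genind} are met: $G$ is a finite graph, $X$ is $G$-like, $h:X \to X$ is a homeomorphism, $(x,y)$ is an IE-pair of distinct points, and $U, V$ are open sets containing $x$ and $y$ respectively.

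Applying Lemma~\ref{genind} produces an indecomposable subcontinuum of $X$ (in fact one meeting both $U$ and $V$, though only existence is needed for the corollary). This completes the argument.

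There is no real obstacle here; the difficulty has already been paid for in proving Lemma~\ref{zpair}, Corollary~\ref{zpairdense}, Lemma~\ref{indcriteria}, Lemma~\ref{genhelp}, and Lemma~\ref{genind}. The only thing to be careful about is the very start: one must ensure that the IE-pair has distinct coordinates (not just that $IE_2 \neq \emptyset$, since $\Delta_2(X) \subseteq IE_2$ whenever $h$ is a self-map, trivially), and this is precisely the content of the ``if and only if'' in Fact~\ref{entfacts}(2).
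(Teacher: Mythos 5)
Your proof is correct and matches the paper's intent exactly: the paper states Corollary~\ref{genhomind} without proof precisely because it is the immediate combination of Fact~\ref{entfacts}(2) (positive entropy yields an IE-pair of distinct points) with Lemma~\ref{genind}, which is what you wrote. Your parenthetical care about needing distinct coordinates rather than mere nonemptiness of $IE_2$ is a sound observation and exactly the role the ``if and only if'' plays here.
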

\begin{thm}\label{thm:repeat} Let $G$ be a finite graph, $X$ be a $G$-like continuum and $f:X \rightarrow X$ be a continuous
surjection with positive entropy. Then, $\varprojlim (X,f)$ contains an indecomposable continuum. 
\end{thm}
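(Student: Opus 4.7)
The plan is to reduce the continuous-surjection case to the homeomorphism case that has already been handled in Corollary~\ref{genhomind}, using the standard shift-on-inverse-limit trick. This mirrors the proof of Theorem~\ref{treecont}, but with Corollary~\ref{genhomind} (general finite graph) in place of Corollary~\ref{hominid} (tree case) as the key input.

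Concretely, I would form the inverse limit $Y = \varprojlim(X,f)$ and let $\sigma : Y \to Y$ be the shift map. Two facts from the excerpt deliver what we need about $Y$ and $\sigma$: Fact~\ref{contfacts}(\ref{5}) gives that $Y$ is $G$-like (since $X$ is), and Fact~\ref{bowen} gives that $\sigma$ is a homeomorphism of $Y$ with $h_{top}(\sigma) = h_{top}(f) > 0$. Thus the pair $(Y,\sigma)$ satisfies the hypotheses of Corollary~\ref{genhomind}: a homeomorphism of positive entropy on a $G$-like continuum for a finite graph $G$.

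Applying Corollary~\ref{genhomind} to $(Y,\sigma)$ yields an indecomposable subcontinuum of $Y = \varprojlim(X,f)$, which is exactly the conclusion we want.

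There is no real obstacle at this stage, since all the difficult work has been completed earlier: the construction of Z-pairs near IE-pairs (Corollary~\ref{zpairdense}), the zigzag combinatorics (Lemmas~\ref{zigzagprep} and \ref{zigzag}), the indecomposability criterion from nested free chains (Lemma~\ref{indcriteria}), the iteration mechanism for the graph case (Lemmas~\ref{genhelp} and \ref{genind}), and finally Corollary~\ref{genhomind} itself. The present theorem only has to observe that positive entropy and $G$-likeness both survive the passage to the inverse limit, and that the shift is a homeomorphism; thereafter a single invocation of the homeomorphism result closes the argument.
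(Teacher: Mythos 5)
Your proposal is correct and is essentially identical to the paper's own proof: the authors likewise pass to $\varprojlim(X,f)$ with the shift $\sigma$, invoke Fact~\ref{contfacts}(\ref{5}) for $G$-likeness and Fact~\ref{bowen} for $\sigma$ being a positive-entropy homeomorphism, and then apply Corollary~\ref{genhomind}. Nothing further is needed.
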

\begin{proof} Proof of this theorem is analogous to that of Theorem~\ref{treecont}. 
Consider $\varprojlim (X,f)$ and $\sigma$ the shift map on $\varprojlim (X,f)$. 
By Fact \ref{contfacts}(\ref{5})  we have that $\varprojlim (X,f)$ is $G$-like and by Fact (\ref{bowen})  $\sigma$ is a homeomorphism
of  $\varprojlim (X,f)$ with positive entropy. Hence, by 
Corollary~\ref{genhomind} we have that   $\varprojlim (X,f)$ contains an indecomposable continuum.

\end{proof}

\begin{cor}\label{cor:repeat2} Let $G$ be a graph, $X$ a $G$-like continuum and $h:X \rightarrow X$ a homeomorphism
with u.p.e. Then, $X$ is an indecomposable continuum.
\end{cor}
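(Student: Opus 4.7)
The plan is to prove the corollary by contradiction: suppose $X = A \cup B$ with $A, B$ proper subcontinua, and derive a contradiction from the u.p.e.\ hypothesis.

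First, I would pick disjoint non-empty open sets $U \subseteq X \setminus B$ and $V \subseteq X \setminus A$, which exist since $A$ and $B$ are proper closed subsets. By u.p.e., every pair in $X^2$ is an IE-pair, so in particular any $(x,y) \in U \times V$ is an IE-pair. Applying Lemma~\ref{genind} yields an indecomposable subcontinuum $M \subseteq X$ meeting both $U$ and $V$. Consequently $M \not\subseteq A$ and $M \not\subseteq B$, so $M \cap A$ and $M \cap B$ are each proper non-empty closed subsets of $M$ whose union is $M$. Were these two sets both subcontinua, this would immediately contradict indecomposability of $M$; however $X$ is not hereditarily unicoherent for general $G$, so an extra step is needed.

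To bridge this gap, I would apply a Baire-category argument to the non-empty disjoint open subsets $P := M \setminus B$ and $Q := M \setminus A$ of $M$. For any connected component $P'$ of $P$, the closure $\overline{P'}$ in $M$ is a subcontinuum of $M$ which is disjoint from the open set $Q$, hence a proper subcontinuum; by indecomposability of $M$, every proper subcontinuum is nowhere dense in $M$, so $\overline{P'}$ is nowhere dense. The same conclusion holds for the components of $Q$. Using separability of $M$ together with the one-dimensionality of the $G$-like continuum $X$, one controls the collection of component closures so that their union is meager in $M$. This union contains $P \cup Q = M \setminus (A \cap B)$, which is a non-empty open subset of $M$, contradicting the Baire category theorem for the compact metric space $M$.

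The main obstacle is making the countability and density steps rigorous: since $M$ is indecomposable it is typically not locally connected, so the components of $P, Q$ need not be open, and in principle the collection of components could be uncountable. A related difficulty is that $A \cap B$ may have non-empty interior in $X$, which would prevent $P \cup Q$ from being dense in $M$. Both points should be addressable by exploiting one-dimensionality of $X$ to pass first to a decomposition with $A \cap B$ nowhere dense, and then by a careful continuum-theoretic analysis (possibly working with irreducible subcontinua of $M$ from $\overline{U}$ to $\overline{V}$ rather than with components) to secure the countability step. An alternative route, should this analysis fail, is to strengthen the construction in Lemma~\ref{genind} using u.p.e.\ to arrange that the Hausdorff limit $M$ produced by Lemma~\ref{indcriteria} is all of $X$, which directly yields the conclusion.
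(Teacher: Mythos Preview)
Your initial step---using u.p.e.\ and Lemma~\ref{genind} to produce an indecomposable subcontinuum $M$ meeting $U$ and $V$---is fine, but the rest of the argument has a genuine gap which you yourself flag and do not close. In an indecomposable continuum the components of an open subset are typically uncountable (think of removing a closed arc from a solenoid, or any open subset of the pseudo-arc): each individual component closure is indeed a proper subcontinuum and hence nowhere dense, but you cannot invoke Baire on an uncountable union. Neither one-dimensionality nor passing to irreducible subcontinua repairs this; there is no general mechanism forcing the component decomposition of $M\setminus B$ to be countable. Your fallback suggestion---rerunning the construction behind Lemmas~\ref{genhelp} and~\ref{indcriteria} so that the Hausdorff limit is all of $X$---is plausible in spirit but is a different proof that you have not carried out; in particular Lemma~\ref{indcriteria} only produces a limit of the chains $\mathcal{C}_n$, and making that limit equal to $X$ requires substantial additional work.

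The paper avoids this difficulty entirely and never produces an indecomposable subcontinuum. Instead it works directly with the free-chain structure coming from a Z-pair. With $U=X\setminus H$, $V=X\setminus K$, one uses u.p.e.\ and Corollary~\ref{zpairdense} to get a Z-pair $(a,b)\in U\times V$, hence a cover $\mathcal{G}$ and a free chain $\mathcal{C}=\{C_1,\dots,C_n\}$ that is a $7$-zigzag from $U$ to $V$: there are $1=m_1<\dots<m_8=n$ with $C_{m_i}\subseteq U$ for $i$ odd and $C_{m_i}\subseteq V$ for $i$ even. The seven consecutive subchains $\mathcal{A}_i=\{C_{m_i},\dots,C_{m_{i+1}}\}$ each begin in $U$ and end in $V$ or vice versa. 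Because $H$ is connected, misses $U$, and $\mathcal{C}$ is a \emph{free} chain, $H$ can meet at most three of the $\mathcal{A}_i$; likewise for $K$. Hence some $\mathcal{A}_i$ is missed by both, contradicting $H\cup K=X$. This combinatorial counting on a single free chain is the key idea your proposal is missing.
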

\begin{proof}To obtain a contradiction, assume that $X$ is not indecomposable and let $H ,K$
be two proper subcontinua of $X$ such that $H \cup K = X$. Let $U = X \setminus H$ and $V = X \setminus K$. As $h$ has u.p.e., by Corollary~\ref{zpairdense} we know theres $a \in U$ and $b \in V$ such that $(a,b)$ is a Z-pair. Now, using the definition of Z-pair, we may choose an open cover ${\mathcal G}$, a free chain ${\mathcal C} = \{C_1, \ldots, C_n\}$ of ${\mathcal G}$ and $1 =m_1 < m_2 < \ldots < m_8=n$  such that $C_{m_i} \subseteq U$
for $i$ odd and $C_{m_i} \subseteq V$ for $i$ even. For each $1 \le i \le 7$, let ${\mathcal A}_i = \{C_{m_i}, \ldots, C_{m_{i+1}}\}$.  As $H$ is connected, $U = X \setminus H$ and ${\mathcal C}$ is a free chain of ${\mathcal G}$, we have that $H$ can intersects ${\mathcal A_i}$ for at most
three $i$'s in $\{1, \ldots, 7\}$. Similarly, $K$ can intersects ${\mathcal A_i}$ for at most
three $i$'s in $\{1, \ldots, 7\}$. Hence for some $1 \le i \le 7$, we have that ${\mathcal A}_i$ intersects neither $H$ nor $K$. However, this is a contradiction as $X =  H \cup K$. Hence, $X$ is indecomposable. 

\end{proof}
\section{Final Remarks and Problems}
If $f:X \rightarrow X$ and $X$ is an interval, then it is well-known that the following conditions are equivalent
\cite{bc}.
\begin{displaymath} 
\xymatrix{ f \textit{ has positive topological entropy}. \ar@2{<->}[d]\\
f \textit { has a periodic point not a power of 2}.\ar@2{<->}[d]\\
f^n \textit{ has a horseshoe for some } n \in \N.}
\end{displaymath}
By  $f^n$ has a horseshoe, we mean that there are two disjoint closed intervals $A, B$ subset of $X$
such that $A \cup B \subseteq f^n(A) \cap f^n(B)$. Using the fact that $f^n$ has a horseshoe, one can easily
show that $\varprojlim(X,f)$ contains an indecomposable continuum as was done in \cite{bm}. It is also
easy to deduce the periodicity condition from the existence of a horseshoe. Hence, the condition of having
a horseshoe is an important condition.

In the case of $X$ an arc-like hereditarily decomposable continua, what is known is that having periodic points not a power of 2 implies positive entropy \cite{mendez}. Also, for a restricted class of functions and continua,
positive entropy implies the existence of  periodic points not a power of 2 \cite{lxy2}. We conjecture that the above equivalent conditions diagram holds
for all $X$ which are arc-like and hereditarily decomposable with the obvious modification in the definition of horseshoe (i.e., $A, B$ are disjoint continua instead of intervals.) Again, from the existence of horseshoe, one can easily conclude the existence of indecomposable continua in the inverse limit. 

By a beautiful paper of Llibre and Misiurewicz \cite{llibre} we have the following equivalent conditions for 
$f:G \rightarrow G$, $G$ a finite graph.
\begin{displaymath} 
\xymatrix{ f \textit{ has positive topological entropy}. \ar@2{<->}[d]\\
f ^n\textit { has a periodic point of every period for some } n \in \N. \ar@2{<->}[d]\\
f^n \textit{ has a horseshoe for some } n \in \N.}
\end{displaymath}
Again the definition of horseshoe has to be modified appropriately \cite{llibre}. (The reader may wonder why discrepancy in the second condition. This arises from the fact that Sharkovskii's theorem does not hold in graphs. However, by Sharkovskii's theorem on the interval and  by the
Minc-Transue's generalization of Sharkovskii's theorem on arc-like continua \cite{mt}, we have that if $f$ has a periodic point not a power of 2, then for some $n \in \N$, $f^n$ has a periodic point of every period.) We conjecture that the above equivalent conditions diagram holds
for all $X$ hereditarily decomposable and $G$-like for some graph $G$. If this conjecture holds, then our main results in this paper can be deduced from the existence of a horseshoe.

\end{document}